\newtheorem{theorem}{Theorem}[section]
\newtheorem{corollary}[theorem]{Corollary}
\newtheorem{lemma}[theorem]{Lemma}
\newtheorem{proposition}[theorem]{Proposition}
\newtheorem{remark}[theorem]{Remark}
\numberwithin{equation}{section}
\begin{document}

\newcommand{\dx}{\,\mathrm{d}x}
\newcommand{\dxi}{\,\mathrm{d}\xi}
\newcommand{\dy}{\,\mathrm{d}y}
\newcommand{\ds}{\,\mathrm{d}s}
\newcommand{\dt}{\,\mathrm{d}t}
\newcommand{\onehalf}{\nicefrac{1}{2}}
\newcommand{\norm}[1] {\| #1 \|}
\newcommand{\lrnorm}[1]{\left\| #1 \right\|}
\newcommand{\bignorm}[1]{\bigl\| #1 \bigr\|}
\newcommand{\Bignorm}[1]{\Bigl\| #1 \Bigr\|}
\newcommand{\Biggnorm}[1]{\Biggl\| #1 \Biggr\|}
\newcommand{\biggnorm}[1]{\biggl\| #1 \biggr\|}
\newcommand{\NN}{\mathbb{N}}
\newcommand{\ZZ}{\mathbb{Z}}
\newcommand{\RR}{\mathbb{R}}

\newcommand{\CHECKPROOF}[1]{ {\color{black} #1}  }


\title{Observability of a 1D Schrödinger equation with time-varying boundaries}
\author{Duc-Trung Hoang \thanks{\noindent Univ. Bordeaux, Institut de Math\'ematiques (IMB). CNRS UMR 5251. 351,  Cours de la Lib\'eration 33405 Talence, France. Email: dthoang@math.u-bordeaux.fr }}

\date{\today} 
\allowdisplaybreaks
\date{}

\maketitle

\abstract{ We discuss the observability of a one-dimensional
  Schrödinger equation on certain time dependent domain. In linear
  moving case, we give the exact boundary and pointwise internal
  observability for arbitrary time. For the general moving, we provide
  exact boundary observability when the curve satisfies some certain
  conditions . By duality theory, we establish the controllability of
  adjoint system. }

\vspace{.7cm}
\noindent \textbf{Keywords:} Observability, controllability, Schrödinger equation, moving domain, non-autonomous evolution equation.\\
\textbf{Mathematics Subject Classification (2010):} 93B07, 93C05, 35R37. 
\section{Introduction}
Let $\tau >0$, and $\ell(t) : [0,\tau] \rightarrow \mathbb{R}_{+}$ a
strictly positive $C^2$--function satisfying $\ell(0) = 1$ and
$\tfrac{\ell'}{\ell} \in L_\infty$. We consider the following system
as a initial boundary value problem in a time dependent domain.
\begin{equation} \label{initial-system-P}\tag{$S_{\text{moving}}$}
  \left\{
    \begin{array}{ll}
    i\tfrac{\partial u}{\partial t} + \tfrac{\partial^2 u}{\partial x^2} = 0  \qquad                        & x \in [0, \ell(t)]\\
    u(0, t) = u(\ell(t), t) = 0 \qquad        & t \ge 0\\
    u(x,0) = u_0              \qquad                       & x \in [0,1]\\
    \end{array}
  \right.
\end{equation}
For Neumann boundary observations we obtain estimates like
\[
            c(\tau) \; \norm{u_0}_{H_0^1(0,1)}^2
\; \le \;    \int_{0}^{\tau}|u_x(0,t)|^2 + |u_x(\ell(t),t)|^2 \,dt 
\; \le \;   C(\tau)\; \norm{u_0}_{H_0^1(0,1)}^2,
\]
see Theorems~\ref{thm:adms-ineq-general-case},
\ref{thm:obs-inequality} and \ref{thm:new-obs-boundary}.  We refer to
the first estimate as observability estimate and to the second as
admissibility estimate. The two first mentioned results rely on a
transformation of (\ref{initial-system-P}) to a non-autonomous
equation on the fixed domain $[0,1]$: the change of variables
$y =\frac{x}{\ell(t)}$ and new function $w(y,t) := u(x,t)$ gives an
equivalent differential equation for $w$, namely
\begin{equation}\label{transform-eq}\tag{$S_{\text{fixed}}$}
 \left\{
    \begin{array}{ll}
  i\frac{\partial w}{\partial t} & = \frac{-1}{\ell(t)^2}\frac{\partial^2 w}{\partial y^2} + i\frac{\ell'(t)}{\ell(t)}y\frac{\partial w}{\partial y},\\
w(0,t)  & = w(1, t)  = 0 \\
w_y(0,t) & = \ell(t)u_x(0,t) \quad\text{and}\\
 w_y(1,t) & = \ell(t)u_x(\ell(t),t)
    \end{array}\right.
  \end{equation}
 which can easily obtained by the chain rule.

 To obtain Theorems~\ref{thm:adms-ineq-general-case} and
 \ref{thm:obs-inequality} we apply the 'multiplier technique': This
 powerful method has been developped by Morawetz \cite{morawetz} and
 was later extended by Ho \cite{Ho} and Lions \cite{Lions}. We extend
 a version of Machtyngier \cite{elaine} to time-dependend
 multipliers. The observability estimate relies then on the
 ``uniqueness-compacity'' lemma~\ref{lem:unique-compactness}. The
 pitfall of this proof strategy is that it only proves existence of some positive 
 constant, without explicit estimates. 
 This is in contrast with Theorem~\ref{thm:new-obs-boundary} which is
 as specific result for the boundary curve
 $\ell(t) = 1{+}\varepsilon t$.  In this linear moving wall case, we
 mimic a successful approach for a one-dimensional wave-equation
 obtained by Haak and the author in \cite{haak} and develop the
 solution of (\ref{initial-system-P}) into a series of
 eigenfunctions. This allows to use results from Fourier analysis; the
 obtained admissibility estimates are sharper than those obtained in
 the previous results, and the observation estimate is provided with
 explicit constants.
 Moreover, we obtain in this case admissibility and
 exact observability of internal point observations:
\[
 k(\tau)\norm{ u_0 }_{L_2(0,1)}^2 
\le  \int_{0}^{\tau}|u(a,t)|^2 \dt 
\le  K(\tau)\norm{ u_0 }_{L_2(0,1)}^2,
\]
see Theorem~\ref{thm:observation-at-internal-points}. It is remakable
that the lower estimte cannot be true when $\varepsilon=0$ on any
rational point $a$ ; the fact that the considered domains extend
however, seem to 'middle out' this obstacle.  Closely related to this
observation are works of Castro and Khapalov
\cite{Castro:moving-interior,Khapalov:1995,Khapalov:2001} where on a
fixed domain $\Omega$ a moving point observer is considered, with
similar conclusions. We also mention results from Moyano
\cite{Moyano:PhD, Moyano:2D-circle} where in a two-dimensional circle
the radius $\ell(t)$ is used as a control parameter.

\smallskip

An additional result on $L_p$-admissibility and observability of point
observations are presented as well, see Theorem~\ref{thm:Lp-ell2-observation}.

\medskip

It is well-known that exact observability for an (autonomous) wave
equation implies observability for the associates Schrödinger
equation, see e.g.  \cite[Chapter 6.7 ff.]{TucsnakWeiss:book}. 
An inspection of the proof gives several obstacles when one passes to
non-autonomous problems, and we were not able to use this approach to
directly infer our results from those for the wave equation in
\cite{haak}. We mention that some results on the so-called Hautus-test
will be subject of an independent publication \cite{HHO}.

\section{Main Results}
Before giving precise formulations of the aforementioned results, let
us start by proving that the Schrödinger equation (\ref{transform-eq})
admits a solution: to this end, we reformulate it as an abstract
non-autonomous Cauchy problem in the following way: let $X = L_2(0,1)$
and the family of operators $\{A(t)\}$ be defined as
\begin{equation}  \label{eq:abstract-nonaut-pb}
   A(t)w = \frac{i}{\ell(t)^2} w_{yy} +\frac{\ell'(t)}{\ell(t)}yw_y
\end{equation}
wich natural domain 
$ D(A(t)) = H^2(0,1) \cap H_0^1(0,1) =: D $. Moreover, by assumption, 
the map $t \mapsto A(t)u$ is continuously differentiable for all $u \in D$.
Let $\omega >0$. Then integration by parts gives
\begin{equation}\label{operator coercive}
\begin{split}
\bigl\langle (A(t)+\omega I)w,w\bigr\rangle 
=  & \;\int_{0}^{1}\Bigl(\tfrac{i}{\ell(t)^2}w_{yy}\overline{w}+\tfrac{\ell'(t)}{\ell(t)}yw_y\overline{w}+\omega|w|^2 \Bigr)\,dy\\
= & \; \tfrac{-i}{\ell(t)^2}\int_{0}^{1}|w_y|^2\,dy +\tfrac{\ell'(t)}{\ell(t)}\int_{0}^{1}yw_y\overline{w}\,dy + \omega\int_{0}^{1}|w|^2\,dy\\
=  & \; \tfrac{-i}{\ell(t)^2}\int_{0}^{1}|w_y|^2\,dy -\tfrac{\ell'(t)}{\ell(t)}\int_{0}^{1}\bigl(|w|^2+yw\overline{w}_y\bigr)\,dy + \omega\int_{0}^{1}|w|^2\,dy
\end{split}
\end{equation}
Taking real parts and observing that
\[
  \text{Re}\Bigl(\int_{0}^{1}y w \overline{w_y}\,dy\Bigr)
= \text{Re}\Bigl(\tfrac{\ell'(t)}{\ell(t)}\int_{0}^{1}yw_y\overline{w}\,dy\Bigr) 
= -\text{Re} \Bigl(\tfrac{\ell'(t)}{2\ell(t)}\int_{0}^{1}|w|^2\,dy\Bigr)
\]
we obtain
\begin{equation}\label{operator coercive 2}
\text{Re}\Bigl(\bigl\langle (A(t)+\omega I)w,w\bigr\rangle\Bigr) = \Bigl(\omega - \tfrac{\ell'(t)}{2\ell(t)}\Bigr)\int_{0}^{1}|w|^2\,dy
\end{equation}
For $\omega > \bignorm{\tfrac{\ell'}{2\ell}}_{L^{\infty}}$, the left
hand side of (\ref{operator coercive 2}) becomes positive, and the
Lumer-Philips theorem asserts that $\omega+A(t)$ generates a contraction semigroup, i.e.
\[
\forall t \ge 0 \qquad  \bignorm{e^{-s \, A(t)}} \leq e^{\omega s}
\]
This ensures in particular that the family $( A(t) )_{t\in[0, \tau]}$
satisfies the Kato stability condition. We apply \cite[Theorem V.4.8 pp.145]{Pazy} to 
conclude that $(A(t))$ generates a unique evolution family
$\{U(t,s)\}_{0 \leq s \leq t \leq \tau}$ on $X$ satisfying  $w(t) =
U(t,0)w_0$. From this we infer a solution to 
(\ref{initial-system-P}) as well, by transforming the fixed domain 
back to the time-dependent domain.

\bigskip

Suppose that
we are given observation operators $C(t) : D \to Y$ where $Y$ is
another Hilbert space. Define the output function $y(t)=C(t)w(t)$. The
operator $C(t)$ is called $(Y, Z)$-admissible if there exist $\gamma
>0$ such that:
\[
\int_0^\tau \bignorm{ C(t) w(t) }_Y^2  dt \; \leq \; \gamma\, \norm{ w_0 }_Z^2.
\]
We say that the system (\ref{transform-eq}) is exactly $(Y, Z)$-observable in
time $\tau>0$ if there exist $\delta >0$ such that:
\[
  \int_0^\tau \bignorm{ C(t) w(t) }_Y^2  dt \; \ge \; \delta\, \norm{ w_0 }_Z^2.
\]
If the spaces $Y, Z$ are fixed, we simply speak of admissibility and
exact observability.  Exact observation in time $\tau>0$ means that
the knowledge of $y_{[0, \tau]}$ allows to recover the initial value
$w_0$. It is well known that exact observability is equivalent to
exact controllability of the retrograde adjoint system:
\[
  z'(t) = -A(t)^* z(t) - C(t)^* w(t) \qquad \text{with}\qquad z(\tau)=0
\] 
  Moreover, it is easy to see that admissibility or observability of
  (\ref{transform-eq}) is equivalent to those of
  (\ref{initial-system-P}).

\subsubsection*{Results on Neumann observations}

\begin{theorem}\label{thm:adms-ineq-general-case}
  Let $\tau > 0$ and $\ell : [0, \tau] \to \RR_+^*$ be a strictly
  positive, twice continuously differentiable function satisfying
  $\tfrac{\ell'}{\ell} \in L_\infty$ and $\ell(0)=1$.  Then there
  exists a constants $C(\tau)$ such that the following
  admissibility inequalities hold:
\[
\int_{0}^{\tau}|u_x(0,t)|^2 + |u_x(\ell(t),t)|^2 \,dt \quad \le \quad
\; C(\tau)\; \norm{u_0}_{H_0^1(0,1)}^2
\]
An explicit estimate of constant $C(\tau)$ is given in the proof, see
(\ref{eq:C1-computation}).
\end{theorem}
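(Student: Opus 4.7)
The plan is to mimic the multiplier (Morawetz/Machtyngier) strategy, working on the equivalent fixed-domain equation \eqref{transform-eq} and then undoing the change of variables. Because $w_y(0,t)=\ell(t)u_x(0,t)$ and $w_y(1,t)=\ell(t)u_x(\ell(t),t)$, the admissibility claim becomes
\[
   \int_0^\tau \tfrac{1}{\ell(t)^2}\bigl(|w_y(0,t)|^2+|w_y(1,t)|^2\bigr)\dt \;\le\; C(\tau)\,\|u_0\|_{H_0^1}^2.
\]
The hypothesis $\ell'/\ell\in L_\infty$ with $\ell(0)=1$ yields the enclosure $\ell_-\le\ell(t)\le\ell_+$ for $\ell_\pm := e^{\pm\tau\|\ell'/\ell\|_{\infty}}$, so $\ell$ behaves as essentially constant up to explicit factors.

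The multiplier would be $q(y)\overline{w_y}$ with $q(y)=y-\tfrac12$. Multiplying \eqref{transform-eq} by $q\overline{w_y}$ and taking real parts, the first-order term $-i\tfrac{\ell'}{\ell}yw_y$ becomes purely imaginary and vanishes, while the principal part produces $\tfrac{q}{2\ell^2}\partial_y|w_y|^2$, which upon $y$-integration delivers the boundary combination. The time derivative is reshaped by the identity $2\,\text{Im}(w_t\overline{w_y}) = \partial_t\text{Im}(w\overline{w_y}) - \partial_y\text{Im}(w\overline{w_t})$; after a further integration by parts and substitution of $\overline{w_t}$ from the equation, one arrives at
\[
  \int_0^\tau\!\tfrac{|w_y(0,t)|^2+|w_y(1,t)|^2}{4\ell(t)^2}\dt
  = \tfrac12\Bigl[\int_0^1\!(y-\tfrac12)\text{Im}(w\overline{w_y})\dy\Bigr]_0^\tau
  + \int_0^\tau\!\!\int_0^1\tfrac{|w_y|^2}{\ell^2}\dy\dt
  + \tfrac12\int_0^\tau\tfrac{\ell'}{\ell}\!\int_0^1\!y\,\text{Im}(w\overline{w_y})\dy\dt.
\]
The last term is the only new contribution compared to the autonomous Machtyngier identity, reflecting the non-autonomous transport piece of \eqref{transform-eq}.

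Controlling the right-hand side requires two energy computations read off directly from the equation. The first gives the $L^2$-conservation on the moving interval, which in fixed coordinates becomes $\|w(\cdot,t)\|_{L^2(0,1)}^2 = \|u_0\|_{L^2}^2/\ell(t)$. The second is the $H_0^1$-type identity
\[
  \tfrac{d}{dt}\|u_x(\cdot,t)\|_{L^2(0,\ell(t))}^2 = -\ell'(t)\,|u_x(\ell(t),t)|^2,
\]
obtained by differentiating the moving-domain $L^2$-norm of $u_x$, using the boundary condition $u_t(\ell(t),t)=-\ell'(t)u_x(\ell(t),t)$ along the moving wall and the relation $u_{xx}=-iu_t$. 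Combined with $|\ell'|\le\|\ell'/\ell\|_\infty\,\ell$ it yields $\|w_y(\cdot,t)\|_{L^2}^2 \le A(\tau)\|u_0\|_{H_0^1}^2 + B(\tau)G(t)$, where $G(t):=\int_0^t(|w_y(0,s)|^2+|w_y(1,s)|^2)\ds$. Substituting this into the multiplier identity, bounding the sesquilinear brackets by Cauchy--Schwarz and Poincaré, and absorbing the resulting $\sqrt{G(\tau)}$ factors via Young's inequality produces a self-inequality
\[
    G(\tau)\;\le\; C_0(\tau)\|u_0\|_{H_0^1}^2 + C_1(\tau)\,G(\tau),
\]
with explicit $C_0,C_1$ depending on $\tau$, $\|\ell'/\ell\|_\infty$ and $\ell_\pm$.

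The main obstacle is that $C_1(\tau)$ grows linearly in $\tau$, so absorbing $C_1 G(\tau)$ to the left only closes for short horizons $\tau$. To reach arbitrary $\tau>0$ I would subdivide $[0,\tau]$ into $N$ intervals of length $\delta$ small enough that $C_1(\delta)<1$, apply the resulting local admissibility bound $\int_{t_j}^{t_{j+1}} g(s)\ds \le K\|w_y(\cdot,t_j)\|_{L^2}^2$ on each piece with $\|w_y(\cdot,t_j)\|_{L^2}^2$ as the local initial energy, and propagate the $H_0^1$-norm across steps via the identity above, which gives $\|w_y(\cdot,t_{j+1})\|_{L^2}^2 \le \gamma\,\|w_y(\cdot,t_j)\|_{L^2}^2$ for an explicit $\gamma=\gamma(\delta,\|\ell'/\ell\|_\infty)$ after absorbing $K\!\int_{t_j}^{t_{j+1}}|w_y(1,s)|^2\ds$. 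Summing the ensuing geometric series produces an admissibility constant $C(\tau)$ of essentially exponential type in $\tau\|\ell'/\ell\|_\infty$; the careful bookkeeping of the constants through this iteration, rather than any single deep step, is the main technical burden and yields the formula (\ref{eq:C1-computation}) cited in the statement.
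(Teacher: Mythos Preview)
Your argument is essentially correct but takes a genuinely different route from the paper, and your final claim about reproducing formula~(\ref{eq:C1-computation}) is inaccurate.

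The paper does not use $q(y)=y-\tfrac12$; it applies the Machtyngier multiplier $q\overline{w_y}+\tfrac12 q_y\overline{w}$ of Proposition~\ref{important-corollary} with a time-independent $q$ satisfying $q(0)=1$, $q(1)=0$ (and then the symmetric choice), thereby isolating each boundary trace separately. More importantly, the paper does \emph{not} run a feedback/absorption argument. Instead, it invokes the energy bound $F(t)\le \ell(t)F(0)$ from Lemma~\ref{second-energy-decay}, which lets every term $A,\dots,E$ in the multiplier identity be estimated directly by $\|u_0\|_{H_0^1}^2$, with no occurrence of $G(\tau)$ on the right-hand side. This gives the explicit polynomial-type constant~(\ref{eq:C1-computation}) in a single step, without subdivision or iteration.

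Your approach, by contrast, feeds the energy identity $\tfrac{d}{dt}\|u_x\|_{L^2(0,\ell(t))}^2=-\ell'(t)|u_x(\ell(t),t)|^2$ back into the multiplier estimate, producing a self-inequality that only closes on short intervals and then requires a geometric iteration; the resulting constant is of exponential type in $\tau\|\ell'/\ell\|_\infty$, which is \emph{not} the formula~(\ref{eq:C1-computation}). What your route does buy is robustness: it never uses the sign of $\ell'$, whereas the inequality $F(t)\le\ell(t)F(0)$ used in the paper's proof relies on the integrand in~(\ref{equation for F}) being nonnegative, i.e.\ on $\ell'\ge 0$. So your proof is more general in that respect, but it is longer, yields a coarser constant, and does not recover the specific expression the statement advertises.
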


Concerning observability, we will have the following result. 
Let $\tau > 0$ and $\ell : [0, \tau] \to \RR_+^*$ be a strictly
positive, twice continuously differentiable function satisfying:
\begin{equation}\label{condition on the curve}
\ell'(t) > 0, \qquad \ell(0) = 1 \qquad \text{and} \qquad \ell'(t)\ell(t) < \frac{1}{\pi} \qquad \forall t \in (0,\tau)
\end{equation}
Integrating for $0$ to $\tau$ of the second condition, we have
$2\tau+\pi(1-\ell(\tau)^2) > 0$.
From the condition $(\ref{condition on the curve})$, $\ell(t)$ is
an increasing function, and then $\ell'(t) < \tfrac{1}{\pi}$. It 
follows that $\frac{\ell'(t)}{\ell(t)} < \frac{1}{\pi}$, and so 
the condition $\tfrac{\ell'}{\ell} \in L_\infty$ guaranteeing 
admissibility is satisfied.

\begin{theorem}\label{thm:obs-inequality}
  For all $\tau$ satisfying (\ref{condition on the curve}), the
  following observability inequality holds:
\[
c(\tau)\, \norm{u_0}_{H_0^1(0,1)}^2  \quad\le\quad   \int_{0}^{\tau}\bigl(|u_x(0,t)|^2+|u_x(\ell(t),t)|^2\bigr) dt.   
\]
Here $c(\tau)$ is some positive constant depending on $\tau$. 
\end{theorem}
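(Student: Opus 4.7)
The strategy is to prove the estimate on the fixed domain for $w$ solving (\ref{transform-eq}) via a Machtyngier-type multiplier adapted to the time-dependent coefficients, and then translate back using the boundary identities $w_y(0,t)=\ell(t)u_x(0,t)$ and $w_y(1,t)=\ell(t)u_x(\ell(t),t)$ together with the fact that $\ell$ is bounded above and below on $[0,\tau]$.

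First I would multiply (\ref{transform-eq}) by a multiplier of the form $m(y)\overline{w_y}$ --- for instance $m(y)=y$, or a combination designed so that both endpoints carry positive weight --- integrate over $(0,1)\times(0,\tau)$, and take real parts. Integration by parts in $y$ (using the Dirichlet conditions at $0$ and $1$) produces the Neumann boundary contributions weighted by $1/\ell(t)^2$, while integration by parts in $t$ of the $iw_t$ term yields the $H_0^1$ energy of $w$ evaluated at $t=0$ and $t=\tau$, plus correction terms arising from the coefficient $\ell'(t)/\ell(t)$ and from the convective drift $i(\ell'/\ell)yw_y$. The outcome is an identity that schematically reads
\[
   \int_0^\tau \frac{|w_y(0,t)|^2+|w_y(1,t)|^2}{\ell(t)^2}\dt
   \;\ge\; c_1\,\norm{w_0}_{H_0^1}^2
   \;-\;\int_0^\tau\!\!\int_0^1 \bigl(\alpha(t)|w_y|^2+\beta(t)|w|^2\bigr)\dy\dt,
\]
with $\alpha,\beta$ explicit in $\ell$ and $\ell'$.

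Second, I would combine this identity with the Poincaré inequality $\norm{w}_{L_2(0,1)}\le\tfrac{1}{\pi}\norm{w_y}_{L_2(0,1)}$ (valid on $H_0^1(0,1)$) and the hypothesis $\ell'(t)\ell(t)<\tfrac{1}{\pi}$. The constant $1/\pi$ is exactly the Poincaré constant for the unit interval, and is precisely the threshold needed so that the drift-induced remainders can be absorbed into the main $\norm{w_y}_{L_2}^2$ term while still leaving a strictly positive coefficient after integration in $t$; here the coercivity estimate (\ref{operator coercive 2}) is used to control the $H_0^1$ energy of $w(t)$ uniformly in $t$ by $\norm{w_0}_{H_0^1}$. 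This produces an inequality of the form
\[
   c_0(\tau)\,\norm{w_0}_{H_0^1(0,1)}^2
   \;\le\; \int_0^\tau \bigl(|w_y(0,t)|^2+|w_y(1,t)|^2\bigr)\dt
        \;+\; K(\tau)\,\norm{w_0}_{L_2(0,1)}^2.
\]
Third, I would remove the compact lower-order term via the uniqueness-compactness Lemma~\ref{lem:unique-compactness}. The hypothesis to verify there is that no nonzero solution of (\ref{transform-eq}) can have vanishing Neumann traces at both endpoints on $[0,\tau]$, which follows by a Holmgren/unique-continuation argument using the $C^1$ regularity of the coefficients in $t$ and ellipticity in $y$. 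Returning to the moving domain then gives the stated estimate for $u_x(0,t)$ and $u_x(\ell(t),t)$.

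The main obstacle will be the bookkeeping in the first step: the convective term $i(\ell'/\ell)y w_y$ does not combine with the multiplier $y\overline{w_y}$ as a perfect divergence, so the resulting cross-terms must be reorganised with care --- possibly by adding a correction multiplier of the form $\alpha(t)\overline{w}$ --- to ensure that the associated quadratic form in $(w,w_y)$ is sign-definite exactly under $\ell'\ell<1/\pi$. As the authors note, the non-constructive nature of step three is also the reason why Theorem~\ref{thm:obs-inequality} only asserts existence of $c(\tau)$ without an explicit value, in contrast with Theorem~\ref{thm:new-obs-boundary}.
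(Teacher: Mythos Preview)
Your proposal is correct and follows essentially the same route as the paper: a Machtyngier-type multiplier identity (the paper takes $q(y,t)=(1-y)\ell(t)$ in Proposition~\ref{important-corollary}, which already incorporates the correction term $\tfrac12\overline{w}q_y$ you anticipate), then Poincar\'e together with the hypothesis $\ell'\ell<1/\pi$ to make the main coefficient positive, and finally Lemma~\ref{lem:unique-compactness} with unique continuation (the paper cites Tataru and Isakov) to absorb the lower-order $L_2$ term. The only minor deviation is that the paper controls the $H_0^1$ energy of $w(t)$ via the explicit formula~(\ref{equation for F}) from Lemma~\ref{second-energy-decay} (which is also how the second boundary trace $|w_y(1,t)|^2$ enters the estimate), rather than via the coercivity identity~(\ref{operator coercive 2}).
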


A direct application of theorem~\ref{thm:obs-inequality} can be used
for periodic moving boundary $\ell(t) = 1+\varepsilon \sin(\omega t)$ where $\varepsilon \in (0,1)$ and $\omega \in (0,\frac{1}{\pi\varepsilon(1+\varepsilon)})$. For all $\tau \in \Bigl(0,\frac{\pi}{2\omega}\Bigr)$, we have

\CHECKPROOF
{
\[
\ell'(t) = \varepsilon\omega \cos(\omega t) > 0 \quad \text{since} \quad \omega t \in \Bigl(0,\frac{\pi}{2}\Bigr) \quad \forall 0 \leq t \leq \tau
\]
\[
\ell(0) = 1 \quad \text{and} \quad \ell'(t)\ell(t) = \varepsilon\omega\cos(\omega t)(1+\varepsilon\sin(\omega t)) < \varepsilon\omega(1+\varepsilon) < \frac{1}{\pi}
\]
}
Hence, $\ell(t)$ satisfies the condition (\ref{condition on the curve}), so the curve is admissible. The problem of particles moving inside one dimensional square-well of
oscillating width was proposed by Fermi and Ulam \cite{Fermi} in order
to explain the mechanism of particles containing high energies. This
model that plays an important role on theory of quantum chaos and it
seems difficult to give an exact solution formula. Glasser
\cite{glasser} investigated the behavior of wave functions and energy
in a given instantaneous eigenstate by assumptions on the smoothness
of boundary. As far as we know, there are no results in the
literature concerning observability and controllability with periodic
boundary functions.

In the case that $\ell(t) = 1{+}\varepsilon t$, the condition
$(\ref{condition on the curve})$ is ensured when
$\varepsilon \in (0,\tfrac{2}{\pi})$ and
$0 < t < \tfrac{1}{\varepsilon}\bigl(\tfrac{2}{\varepsilon\pi}-1\bigr)$.
We have the following exact analytic solution for
\ref{initial-system-P}, due to Doescher and Rice \cite{doescher}
 \begin{equation}\label{represetiation-formula}
 u(x,t) = \sum_{n=1}^{+\infty}  a_n \sqrt{\tfrac{2}{\ell(t)}}  \sin\bigl(\tfrac{n\pi x}{\ell(t)}\bigr)  e^{i (\frac{\varepsilon x^2}{4\ell(t)}-n^2\pi^2\tfrac{t}{\ell(t)}) }
 \end{equation}
 where the coefficients $(a_n)$ are defined by the sine-series
 development of the initial value $u_0$.  A similar exact solution in
 the case of two-variable moving wall can be found in \cite{yilmaz}
 where the author uses the fundamental transformation to change the
 moving boundary problem into a solvable one side fixed boundary
 problem.

\medskip

Based on formula (\ref{represetiation-formula}) we obtain a first result on Neumann
observability at the boundary $\{ (x, t): x \in \{ 0, \ell(t) \} \}$.
Compared to Theorem~\ref{thm:obs-inequality} the admissibility
constant is sharper. In contrast with 
Theorem~\ref{thm:obs-inequality}, where we can only prove existence of some positive
constant $c(\tau)$, we obtain now an explicit estimate for the
observability constant. The proof is presented in
section~\ref{sec:proof-linear-moving}.

\begin{theorem}\label{thm:new-obs-boundary}
  For every $\tau > 0$ there exist explicit  constants
  $c(\tau, \varepsilon), C(\tau, \varepsilon)$ such that:
  \begin{equation}\label{eq::Neumann-obs}
c(\tau,\varepsilon)\norm{ u_0 }_{H_0^1(0,1)}^2 \; \le \;
 \int_{0}^{\tau}\bigl|u_x(0,t)\bigr|^2 + \bigl|u_x(\ell(t),t)\bigr|^2\dt   \; \le \; 
C(\tau,\varepsilon)\norm{ u_0 }_{H_0^1(0,1)}^2
  \end{equation}
  In particular, the Neumann observation at the boundary of the system
  \eqref{initial-system-P} is exact observable in any time $\tau
  >0$.
  Moreover, the observability coefficient $c(\tau,\varepsilon)$ decays
  $\thicksim \exp\Bigr(\frac{-2k\pi^2}{\varepsilon\tau}\Bigr)$ where
  $k > \frac{3}{2}$.
\end{theorem}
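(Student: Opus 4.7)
The plan is to exploit the Doescher-Rice representation (\ref{represetiation-formula}), reducing the boundary observation integral to a nonharmonic Fourier series in a rescaled time variable, and then invoking sharp Ingham-Kahane inequalities for the quadratic frequencies $\{n^2\}$. First I would differentiate (\ref{represetiation-formula}) in $x$ and evaluate at the two endpoints: since $\sin(n\pi)=0$ and $\cos(n\pi)=(-1)^n$, the terms carrying the $\sin$-factor collapse, yielding
\[
 u_x(0,t) = \frac{\pi\sqrt{2}}{\ell(t)^{3/2}}\sum_{n\ge 1} n\, a_n\, e^{-i n^2\pi^2 t/\ell(t)},
\qquad
 u_x(\ell(t),t) = \frac{\pi\sqrt{2}}{\ell(t)^{3/2}}\,e^{i\varepsilon\ell(t)/4}\sum_{n\ge 1} (-1)^n n\, a_n\, e^{-i n^2\pi^2 t/\ell(t)}.
\]

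Next, the substitution $s=\pi t/\ell(t)=\pi t/(1+\varepsilon t)$ satisfies $ds = \pi\,dt/\ell(t)^2$ and linearises the phases to $e^{-in^2\pi s}$; the time interval $[0,\tau]$ is mapped bijectively onto $[0,\sigma]$ with $\sigma=\pi\tau/(1+\varepsilon\tau)\le\pi/\varepsilon$. The frequencies $\{n^2\pi\}_{n\ge 1}$ form a separated sequence with linearly growing gaps $\pi(2n+1)\ge 3\pi$. Classical Ingham-Kahane estimates for nonharmonic Fourier series with polynomially separated frequencies (Kahane's theorem, together with the Schrödinger variants used by Jaffard-Tucsnak-Zuazua and for the wave equation in \cite{haak}) provide explicit constants $k(\sigma),K(\sigma)>0$ such that
\[
 k(\sigma)\sum_{n\ge 1}|c_n|^2 \;\le\; \int_0^{\sigma}\Bigl|\sum_n c_n e^{-in^2\pi s}\Bigr|^2\,ds \;\le\; K(\sigma)\sum_{n\ge 1}|c_n|^2
\]
for every $\sigma>0$. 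Applied with $c_n = n a_n$ to the trace at $x=0$ and with $c_n = (-1)^n n a_n$ to the trace at $x=\ell(t)$ (the unimodular factor $e^{i\varepsilon\ell/4}$ being invisible to the modulus), then reversing the substitution by $dt=\ell(t)^2\,ds/\pi$ and recalling that $\norm{u_0}_{H_0^1(0,1)}^2\simeq\pi^2\sum_{n\ge 1} n^2|a_n|^2$, one obtains (\ref{eq::Neumann-obs}); the uniformly bounded weights $\ell(t)^{-3}\ell(t)^2/\pi$ collected along the way contribute only multiplicative constants depending on $\tau$ and $\varepsilon$.

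The principal difficulty is to track the \emph{explicit} dependence of the constants on $\tau$ and $\varepsilon$ and so recover the announced decay $c(\tau,\varepsilon)\sim\exp(-2k\pi^2/(\varepsilon\tau))$ with $k>3/2$. The admissibility estimate is the easy side, as a Plancherel-type upper bound $K(\sigma)\le C$ holds on any interval of positive length. For the observability side, however, the best Kahane-type constant $k(\sigma)$ for the quadratic sequence $\{n^2\}$ is known to decay exponentially fast as $\sigma\to 0^+$, and one has to carry this sharp exponent through the substitution $\sigma=\pi\tau/(1+\varepsilon\tau)$ into the regime of small $\varepsilon\tau$ where the effective observation window is short. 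The hypothesis $\ell'/\ell\in L_\infty$ (which for the linear wall amounts simply to the boundedness of $\varepsilon$) is what keeps the Jacobian weights under control and prevents them from degrading the sharp Kahane exponent in the final answer.
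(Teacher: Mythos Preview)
Your proposal is correct and follows the paper's argument almost verbatim: the paper differentiates the Doescher--Rice series at the endpoints, makes the change of variable $\xi=-\tfrac{1}{\ell(t)}+\text{const}$ (affinely equivalent to your $s=\pi t/\ell(t)$) to linearise the phases, and then applies a nonharmonic Fourier inequality to the quadratic frequencies, the upper bound coming from periodicity and Parseval. The one sharpening is that the paper invokes specifically \cite[Theorem~3.1 and Corollary~3.3]{tenenbaum} (Tenenbaum--Tucsnak) rather than a generic Ingham--Kahane statement; this is the reference that actually delivers the explicit exponential rate for the lower constant, and you should cite it in place of the vaguer ``Kahane-type'' allusion.
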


\begin{remark} 
  By Dirichlet condition $u(\ell(t),t) = 0$ for all
  $t$. Differentiating yields
  $\ell'(t) u_x(\ell(t),t) + u_t(\ell(t),t) = 0$, and so
  $u_x(\ell(t),t) = \tfrac{-1}{\varepsilon}u_t(\ell(t),t)$. As a
  result, observing $u_t(\ell(t),t)$ or $u_x(\ell(t), t)$ is, up to a
  constant, the same.
\end{remark}

\subsubsection*{Point observations} 
We now focus on point observations $u \mapsto u(a, t)$ in the case of a linearly moving wall 
$\ell(t)=1{+}\varepsilon t$.
Observe that in the ``degenerate'' case that is, $\varepsilon = 0$, the (then)
autonomous Schrödinger equation has the well-known solution
\[
   u(x,t) = \sum_{n=1}^{+\infty}a_n e^{-i\pi^2n^2t}\sin(n\pi x).
\]
Clearly, there is no reasonable observability possible at rationals
points $x$ since infinitely many terms in the sum vanish,
independently of the leading coefficient $a_n$.  This changes when
$\varepsilon > 0$ : from (\ref{represetiation-formula}) we obtain
\[
  u(a,t) = \sum_{n=1}^{+\infty}a_n \bigl(\tfrac{2}{\ell(t)}\bigr)^{\tfrac{1}{2}}\exp\bigl(\tfrac{i\varepsilon a^2}{4\ell(t)}-in^2\pi^2\tfrac{t}{\ell(t)}\bigr)\sin\bigl(\tfrac{n\pi a}{\ell(t)}\bigr)
\]
and so
\begin{equation}  \label{eq:interal-point}
   \int_{0}^{\tau}\bigl|u(a,t)\bigr|^2 \dt = \int_{0}^{\tau} \tfrac{2}{\ell(t)}\Bigl| \sum_{n=1}^{+\infty}a_n e^{-i\pi^2n^2\tfrac{t}{\ell(t)}}\sin\bigl(\tfrac{n\pi a}{\ell(t)}\bigr)\Bigr|^2 \dt.
\end{equation}
Based on a remarkable result of Tenenbaum and Tucsnak we obtain the
following result in section~\ref{sec:proof-linear-moving}.
\begin{theorem}\label{thm:observation-at-internal-points}
Assume $\ell(t)=1{+}\varepsilon t$. Then, for every $\tau > 0$, we have:
\begin{equation}\label{hypothese}
\quad K(\tau)\norm{ u_0 }_{L_2(0,1)}^2 \quad \gtrsim \quad \int_{0}^{\tau}|u(a,t)|^2 \dt \quad \gtrsim \quad k(\tau)\norm{ u_0 }_{L_2(0,1)}^2
\end{equation}
More precisely, $k(\tau) \approx Me^{-\frac{c}{T}}$ where
$T = \frac{1}{\ell(0)} - \frac{1}{\ell(\tau)}$ and $M, c$ are some
positive constants that appear in to proof.
\end{theorem}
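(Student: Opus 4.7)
The plan is to exploit the explicit point-observation identity~(\ref{eq:interal-point}) by a well-chosen time substitution that turns the oscillating exponentials $e^{-in^{2}\pi^{2}t/\ell(t)}$ into genuine characters $e^{-i\mu\sigma}$, and then to invoke the Tenenbaum--Tucsnak observability theorem for non-harmonic Fourier series of Schr\"odinger type. Concretely, I would set $\sigma = t/\ell(t)$, which satisfies $1/\ell(t) = 1-\varepsilon\sigma$, $dt = \ell(t)^{2}d\sigma$ and maps $[0,\tau]$ onto $[0, T/\varepsilon]$ with $T = \ell(0)^{-1}-\ell(\tau)^{-1}$ as in the statement. After this change~(\ref{eq:interal-point}) rewrites as
\[
\int_{0}^{\tau}|u(a,t)|^{2}\dt = \int_{0}^{T/\varepsilon}\frac{2}{1-\varepsilon\sigma}\Big|\sum_{n\ge 1}a_{n}\,e^{-in^{2}\pi^{2}\sigma}\sin\bigl(n\pi a(1-\varepsilon\sigma)\bigr)\Big|^{2}d\sigma,
\]
and the weight $2/(1-\varepsilon\sigma)$ remains pinched between $2$ and $2/(1-T)$, so it can be absorbed into multiplicative constants when establishing two-sided bounds.

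Next I would expand the sine via Euler's formula to recast the integrand as a non-harmonic Fourier series
\[
\frac{1}{2i}\sum_{n\ge 1}\Bigl(a_{n}e^{in\pi a}\,e^{-i\mu_{n}^{+}\sigma} - a_{n}e^{-in\pi a}\,e^{-i\mu_{n}^{-}\sigma}\Bigr),\qquad \mu_{n}^{\pm} := n^{2}\pi^{2}\pm n\pi a\varepsilon,
\]
whose coefficients satisfy $|a_{n}e^{\pm in\pi a}/(2i)|^{2}=|a_{n}|^{2}/4$. The combined frequency set $\Lambda=\{\mu_{n}^{\pm}\}_{n\ge 1}$ has asymptotic in-family gaps $\mu_{n+1}^{\bullet}-\mu_{n}^{\bullet}\sim 2\pi^{2}n$ matching those of the classical Schr\"odinger spectrum, while the cross-family separation $\mu_{n+1}^{-}-\mu_{n}^{+}=(2n{+}1)\pi(\pi-a\varepsilon)$ is strictly positive as soon as $a\varepsilon<\pi$, an assumption that is automatic for $a\in(0,1)$ and moderate $\varepsilon$ and that can otherwise be arranged by treating finitely many low modes separately.

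The key step is then to apply the Tenenbaum--Tucsnak Ingham-type observability estimate to $\Lambda$ on the interval of length $\tau_{0}=T/\varepsilon$, which yields
\[
\int_{0}^{\tau_{0}}\Big|\sum_{\lambda\in\Lambda}c_{\lambda}e^{-i\lambda\sigma}\Big|^{2}d\sigma \gtrsim M\, e^{-c/\tau_{0}}\sum_{\lambda\in\Lambda}|c_{\lambda}|^{2}.
\]
Combining this with the Parseval identity $\sum|a_{n}|^{2}=\norm{u_{0}}_{L_{2}(0,1)}^{2}$, the two previous steps, and the uniform weight bound gives the lower estimate $\int_{0}^{\tau}|u(a,t)|^{2}\dt\gtrsim Me^{-c/T}\norm{u_{0}}_{L_{2}(0,1)}^{2}$. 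The upper admissibility bound follows from the dual Bessel-type inequality in the very same framework.

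The main obstacle is the application in the previous step: the Tenenbaum--Tucsnak theorem is typically formulated for a single Schr\"odinger-like sequence $\{(n\pi)^{2}\}$, and one has to verify that it accommodates the twin family $\{\mu_{n}^{\pm}\}$ — either directly, by checking that their weight-function hypothesis covers two interlaced Weyl-type sequences, or by perturbing from the classical case. A secondary subtlety is to ensure that the exponential rate $c$ in $k(\tau)\approx Me^{-c/T}$ can be made independent of the Diophantine properties of $a$; this independence is precisely the gain expected from the moving boundary (in the static case $\varepsilon=0$ observability fails at rational $a$) and must be tracked carefully through their proof.
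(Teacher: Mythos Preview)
Your proposal is correct and follows essentially the same route as the paper: a time substitution (the paper uses $\xi=-1/\ell(t)$, which is affinely equivalent to your $\sigma=t/\ell(t)$), expansion of the sine into two exponentials, and an application of the Tenenbaum--Tucsnak estimate to the resulting perturbed quadratic frequency set. The ``main obstacle'' you flag is handled in the paper simply by re-indexing the twin family $\{\mu_n^{\pm}\}_{n\ge 1}$ as a single sequence over $\mathbb{Z}$ (setting $a_{-n}=a_n$ and $\lambda_{\pm n}=\pi^2 n^2/\varepsilon \pm n\pi a$), after which \cite[Theorem~3.1 and Corollary~3.3]{tenenbaum} apply directly with $r=\varepsilon/\pi^2$ and perturbation parameter $C=a\pi$.
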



\begin{corollary}
  For all $a \in (0,1)$ the point observation $C=\delta_a$ for the
  system \eqref{initial-system-P} is exactly observable in arbitrary
  short time.
\end{corollary}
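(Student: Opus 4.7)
The plan is to read the corollary as an immediate consequence of Theorem~\ref{thm:observation-at-internal-points}, so almost no new work is required. Fix $a\in(0,1)$ and $\tau>0$, and let $u$ denote the solution of (\ref{initial-system-P}) with initial value $u_0 \in L_2(0,1)$. Because $\ell'=\varepsilon>0$ and $\ell(0)=1$, the point $a$ lies in $[0,\ell(t)]$ for every $t\in[0,\tau]$, so the pointwise evaluation $C(t)u := u(a,t)$ (corresponding to $C=\delta_a$) is a well-defined scalar observation.

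By the general definition of exact $(Y,Z)$-observability recalled just before Theorem~\ref{thm:adms-ineq-general-case}, with $Y=\mathbb{C}$ and $Z=L_2(0,1)$, the claim amounts to producing some $\delta>0$ for which
$$
  \int_0^\tau |u(a,t)|^2 \dt \; \ge \; \delta\, \norm{u_0}_{L_2(0,1)}^2 .
$$
This is precisely the lower bound of (\ref{hypothese}) with $\delta = k(\tau)$, so what must be verified is that $k(\tau)>0$ for every $\tau>0$, however small. Since $\ell(t)=1+\varepsilon t$ gives
$$
  T \; = \; \tfrac{1}{\ell(0)}-\tfrac{1}{\ell(\tau)} \; = \; \tfrac{\varepsilon\tau}{1+\varepsilon\tau},
$$
one has $T>0$ as soon as $\tau>0$, and hence the quantitative estimate $k(\tau)\approx M e^{-c/T}$ supplied by Theorem~\ref{thm:observation-at-internal-points} is strictly positive. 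Exact observability therefore holds in arbitrary short time.

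There is essentially no substantive obstacle, because all of the analytic content has already been paid for in the proof of Theorem~\ref{thm:observation-at-internal-points}: the eigenfunction expansion (\ref{represetiation-formula}), the reduction to the non-harmonic exponential sum in (\ref{eq:interal-point}), and the Tenenbaum--Tucsnak-type non-harmonic Fourier estimate. The only mild technicality worth noting is that the change of variables $y=x/\ell(t)$ turns the fixed observer at $a$ into a \emph{moving} observer at $a/\ell(t)\in(0,1)$ in the fixed-domain formulation (\ref{transform-eq}); but as remarked just after (\ref{transform-eq}), admissibility and observability of the two formulations are equivalent, so this transfer requires no separate argument, and the corollary follows.
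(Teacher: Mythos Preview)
Your proposal is correct and matches the paper's approach: the corollary is stated immediately after Theorem~\ref{thm:observation-at-internal-points} with no separate proof, being an immediate consequence of the lower bound in (\ref{hypothese}) together with the positivity of $k(\tau)$ for every $\tau>0$. Your explicit verification that $T=\varepsilon\tau/(1+\varepsilon\tau)>0$ whenever $\tau>0$, and hence $k(\tau)\approx M e^{-c/T}>0$, is exactly the intended reading.
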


\subsubsection*{$L_p$-estimates  of point observations}
Finally we have to following $L_p$ admissibility and observability estimates.

\begin{theorem}\label{thm:Lp-ell2-observation}
  Let $\ell(t)=1{+}\varepsilon t$. We assume that
  $u_0 \in H_0^1(0,1)$. For $0 < p < 2$ and $a \in (0,1)$, we have
\[
k_p(\tau) \norm{u_0}_{L_2(0,1)}^{\nicefrac{2}{p}}  \norm{u_0}_{H_0^1(0,1)}^{1-\nicefrac{2}{p}} 
\le
\Bigl( \int_{0}^{\tau}\bigl|u(a,t)\bigr|^p dt\Bigr)^{\nicefrac{1}p}  
\le
K_p(\tau) \norm{u_0}_{L_2(0,1)}^{\nicefrac{2}{p}}  \norm{u_0}_{H_0^1(0,1)}^{1-\nicefrac{2}{p}} 
\]
where $k_p(\tau), $ are constants depending on $\tau$ and $p$.
\end{theorem}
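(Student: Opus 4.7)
The idea is to bootstrap the $L^2$-level admissibility and observability estimates of Theorem~\ref{thm:observation-at-internal-points} to $L^p$ for $0<p<2$ by interpolating against an $L^\infty$-bound on the trace map $t\mapsto u(a,t)$.

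\smallskip

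\emph{Step 1: $L^\infty$ bound.} I would first establish
\[
\norm{u(a,\cdot)}_{L_\infty(0,\tau)} \;\le\; M\,\norm{u_0}_{H_0^1(0,1)}.
\]
Using the exact solution formula (\ref{represetiation-formula}), one has $|u(a,t)|\le\sqrt{2/\ell(t)}\sum_{n\ge 1}|a_n|\,|\sin(n\pi a/\ell(t))|$. Writing $|a_n|\,|\sin(\cdot)|=(n\pi|a_n|)\cdot(|\sin(\cdot)|/(n\pi))$ and applying Cauchy--Schwarz, the first factor gives exactly $\norm{u_0}_{H_0^1(0,1)}$ and the second is bounded uniformly by $(\sum_{n\ge 1}(n\pi)^{-2})^{1/2}=1/\sqrt 6$. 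Since $\ell(t)\ge \ell(0)=1$ on $[0,\tau]$, this yields the claim with $M=\sqrt{1/3}$.

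\smallskip

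\emph{Step 2: Lower bound.} For $0<p<2$, I would use the elementary interpolation identity
\[
\int_0^\tau |u(a,t)|^2\,dt \;=\; \int_0^\tau |u(a,t)|^p\,|u(a,t)|^{2-p}\,dt \;\le\; \norm{u(a,\cdot)}_{L_\infty(0,\tau)}^{2-p}\int_0^\tau|u(a,t)|^p\,dt.
\]
Combining this with the observability bound $\int_0^\tau|u(a,t)|^2\,dt\ge k(\tau)\norm{u_0}_{L_2(0,1)}^2$ of Theorem~\ref{thm:observation-at-internal-points} and the $L^\infty$ estimate of Step~1 yields
\[
\int_0^\tau|u(a,t)|^p\,dt \;\ge\; \frac{k(\tau)}{M^{2-p}}\,\norm{u_0}_{L_2(0,1)}^2\,\norm{u_0}_{H_0^1(0,1)}^{p-2},
\]
and extracting $p$-th roots gives the claimed lower bound with $k_p(\tau)=k(\tau)^{1/p}M^{(p-2)/p}$.

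\smallskip

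\emph{Step 3: Upper bound.} A first estimate comes from Hölder's inequality with conjugate exponents $2/p$ and $2/(2-p)$ combined with the admissibility part of Theorem~\ref{thm:observation-at-internal-points}:
\[
\int_0^\tau |u(a,t)|^p\,dt \;\le\; \tau^{(2-p)/2}\Bigl(\int_0^\tau|u(a,t)|^2\,dt\Bigr)^{p/2} \;\le\; \tau^{(2-p)/2}K(\tau)^{p/2}\norm{u_0}_{L_2(0,1)}^p.
\]
To recast the right-hand side into the claimed mixed-norm form one would further interpolate this against the $L^\infty$ bound of Step~1 in a logarithmic-convexity fashion.

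\smallskip

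\emph{Main obstacle.} The delicate part is Step~3: in the range $0<p<2$ the natural Gagliardo--Nirenberg-type identity $\norm{g}_{L_p}^p\ge \norm{g}_{L_\infty}^{p-2}\norm{g}_{L_2}^2$ runs in the ``wrong'' direction for an upper estimate, so producing the upper bound in the stated mixed-norm form is substantially more subtle than the lower bound. I would expect the proof to exploit the almost-periodic structure of $u(a,\cdot)$ supplied by (\ref{represetiation-formula}) (for instance through a non-concentration or Ingham-type argument combined with the change of variable $s=t/\ell(t)$ used for Theorem~\ref{thm:observation-at-internal-points}) rather than a purely soft interpolation.
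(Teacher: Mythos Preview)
Your lower-bound argument (Steps~1--2) is correct and is a streamlined variant of the paper's approach. The paper splits $\int|u|^2=\int|u|^{p\theta}|u|^{4(1-\theta)}$ via H\"older with $\theta=2/(4-p)$, bounds $\int|u|^4\le\bigl(\sum_n|a_n|\bigr)^2\int|u|^2$ using the crude pointwise estimate $|u(a,t)|\le\sum_n|a_n|$, and closes with $\sum_n|a_n|\lesssim\norm{u_0}_{H_0^1}$ by Cauchy--Schwarz. Your $L^\infty$ split reaches the same endpoint more directly, since your Step~1 is precisely $\norm{u(a,\cdot)}_{L_\infty}\lesssim\sum_n|a_n|\lesssim\norm{u_0}_{H_0^1}$. (One small correction: by Lemma~\ref{initial-norm} the quantity $\bigl(\sum_n n^2\pi^2|a_n|^2\bigr)^{1/2}$ is only \emph{equivalent} to $\norm{u_0}_{H_0^1}$, not exactly equal.)

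For the upper bound your instinct is right, and the obstruction is more fundamental than you suspect. The paper's argument is exactly your Step~3 --- H\"older down to $L^2$ followed by the admissibility part of Theorem~\ref{thm:observation-at-internal-points} --- and then the assertion $\norm{u_0}_{L_2}\lesssim\norm{u_0}_{L_2}^{2/p}\norm{u_0}_{H_0^1}^{1-2/p}$. Since $1-2/p<0$ this last step is equivalent to $\norm{u_0}_{H_0^1}\lesssim\norm{u_0}_{L_2}$, which is false. In fact the upper estimate as stated cannot hold: take $u_0(x)=e^{i\varepsilon x^2/4}\sin(n\pi x)$ so that only $a_n$ is nonzero; then $|u(a,t)|=\ell(t)^{-1/2}\bigl|\sin\bigl(n\pi a/\ell(t)\bigr)\bigr|$, and since for $\varepsilon>0$ the argument $n\pi a/\ell(t)$ sweeps through many periods as $t$ ranges over $[0,\tau]$, the quantity $\bigl(\int_0^\tau|u(a,t)|^p\,dt\bigr)^{1/p}$ stays bounded away from zero uniformly in $n$, whereas $\norm{u_0}_{L_2}^{2/p}\norm{u_0}_{H_0^1}^{1-2/p}\sim n^{1-2/p}\to 0$. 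So do not look for a subtler argument; the correct upper bound is the one you already obtained, namely $\bigl(\int_0^\tau|u(a,t)|^p\,dt\bigr)^{1/p}\lesssim\norm{u_0}_{L_2}$.
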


The upper estimate is a direct consequence of (\ref{hypothese}).
Indeed, by the continuity of the embeddings $H_0^1 \hookrightarrow L_2 \hookrightarrow L_p$ and the boundedness of $\ell(t)$ to obtain:
\[
\norm{u(a,t)}_{L_p} \lesssim \norm{u(a,t)}_{L_2}  \overset{ \text{from} (\ref{hypothese})}{\lesssim} \norm{u_0}_{L_2} \lesssim \norm{u_0}_{L_2(0,1)}^{\nicefrac{2}{p}}  \norm{u_0}_{H_0^1(0,1)}^{1-\nicefrac{2}{p}} 
\]
Hence, it serves only to show that the lower estimate is of the right order. 


\section{Proof of the main results}\label{sec:proof-linear-moving}

\subsection{The multiplier Lemma}
 We follow E. Machtyngier \cite[Lemma 2.2]{elaine} by using multiplier method for
 (\ref{transform-eq}): Let $w$ be a solution to (\ref{transform-eq}) and
 $q \in C^2([0,1]\times [0, \tau])$ be a real valued
 function. Then, due to the differential equation (\ref{transform-eq}),
 \begin{equation}\label{multiplying-q}
   \text{Re}\Bigl(\int_{0}^{\tau}\int_{0}^{1}(q\overline{w_y}+\tfrac12\overline{w}q_y)\bigl(iw_t + \frac{1}{\ell(t)^2}w_{yy}-i\frac{\ell'(t)}{\ell(t)}yw_y \bigr) \,dy \,dt \Bigr) = 0 
 \end{equation}
  We separate the left hand side of (\ref{multiplying-q}) into three parts and simplify each of them.
 \begin{lemma}\label{lemma-multiply} The following identities hold.
   \begin{align}
      \label{item:one}
       &  \left\{\begin{array}{rl} 
    \; \text{Re}\Bigl(\displaystyle \int\limits_{0}^{\tau} & \hspace*{-2ex}\displaystyle\int\limits_{0}^{1}(q\overline{w_y}+\tfrac12\overline{w}q_y) iw_t\,dy\,dt \Bigr) \\
 =  & \;  \text{Re}\Bigl(\displaystyle \int\limits_{0}^{1} \Bigl[\tfrac12iq\overline{w_y}w\Bigl]_{t=0}^{t=T} dy\Bigr) -\tfrac12 \text{Re}\Bigl(\displaystyle \int\limits_{0}^{\tau}\displaystyle \int\limits_{0}^{1}iwq_t\overline{w_y} \,dy \,dt\Bigr)
          \end{array}\right.\\
      \label{item:two} 
     &   \left\{\begin{array}{rl} \displaystyle
     \; \text{Re}\Bigl(\displaystyle \int\limits_{0}^{\tau} &  \hspace*{-2ex}\displaystyle\int\limits_{0}^{1}\frac{w_{yy}}{l(t)^2}(q\overline{w_y}+\tfrac12\overline{w}q_y) \,dy \,dt\Bigr) \\
   = & \;  \text{Re}\Bigl(\displaystyle \int\limits_{0}^{\tau}\frac{1}{2\ell(t)^2}(q(1,t)|w_y(1,t)|^2 - q(0,t)|w_y(0,t)|^2) \,dt \Bigr) \\
    & \; -  \text{Re}\Bigl(\displaystyle \int\limits_{0}^{\tau}\displaystyle \int\limits_{0}^{1}\frac{1}{\ell(t)^2}|w_y|^2q_y \,dy \,dt \Bigr) - \text{Re}\Bigl(\displaystyle \int\limits_{0}^{\tau}\displaystyle \int\limits_{0}^{1}\frac{w_y\overline{w}}{2\ell(t)^2}q_{yy} \,dy \,dt \Bigr)
         \end{array}\right.\\
    \label{item:three}
     &
       \left\{\begin{array}{rl}
          \; -\text{Re}\Bigl(\displaystyle \int\limits_{0}^{\tau} &  \hspace*{-2ex}\displaystyle \int\limits_{0}^{1}\frac{iy\ell'(t)}{\ell(t)}w_y(q\overline{w_y}+\tfrac12\overline{w}q_y) \,dy \,dt\Bigr) \\
         = & \; -\text{Re}\Bigl(\displaystyle \int\limits_{0}^{\tau}\displaystyle \int\limits_{0}^{1}\frac{iy\ell'(t)}{\ell(t)}q|w_y|^2 \,dy \,dt \Bigr) - \text{Re}\Bigl(\displaystyle \int\limits_{0}^{\tau}\displaystyle \int\limits_{0}^{1}\tfrac12\frac{iy\ell'(t)}{\ell(t)}w_y\overline{w}q_y\,dy\,dt\Bigr)
       \end{array}\right.
   \end{align}
 \end{lemma}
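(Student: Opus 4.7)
All three identities are formal integration-by-parts computations applied to the multiplier $q\overline{w_y} + \tfrac12 \overline{w}q_y$. The overall strategy is to split the multiplier linearly into its two pieces, transfer derivatives off $w_{yy}$, $w_y$, or $w_t$ via integration by parts, discard the spatial boundary terms at $y\in\{0,1\}$ using the Dirichlet condition $w(0,t)=w(1,t)=0$, and invoke $\text{Re}(iz)=0$ for $z$ real to dispose of residual purely imaginary contributions.

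Identity (\ref{item:three}) requires no integration at all: it is simply the algebraic distribution of $w_y$ over $q\overline{w_y}+\tfrac12\overline{w}q_y$. Identity (\ref{item:two}) is the classical spatial multiplier computation. I would rewrite $qw_{yy}\overline{w_y}/\ell(t)^2$ using $\text{Re}(w_{yy}\overline{w_y}) = \tfrac12\partial_y|w_y|^2$ and integrate by parts in $y$ once, producing the boundary contribution $\tfrac{1}{2\ell(t)^2}[q|w_y|^2]_{y=0}^{y=1}$ together with $-\tfrac{1}{2\ell(t)^2}\int q_y|w_y|^2\,dy$. For the cross piece $\tfrac{1}{2\ell(t)^2}w_{yy}\overline{w}q_y$ I would integrate by parts once in $y$, the boundary term vanishing because $\overline{w}(0,t)=\overline{w}(1,t)=0$; this yields an additional $-\tfrac{1}{2\ell(t)^2}\int |w_y|^2 q_y\,dy$ (combining with the previous one to give the $-\tfrac{1}{\ell(t)^2}|w_y|^2 q_y$ term of the claim) and the remaining $-\tfrac{1}{2\ell(t)^2}\int w_y\overline{w}q_{yy}\,dy$ contribution.

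Identity (\ref{item:one}) is the most delicate. I would first integrate the $\tfrac{i}{2}\overline{w}q_y w_t$ piece by parts in $y$ (boundary contributions vanish), rewriting it as $-\tfrac{i}{2}\int q(\overline{w_y}w_t + \overline{w}w_{yt})\,dy$. Combined with the original $iq\overline{w_y}w_t$ term this yields $\tfrac{i}{2}\int q(\overline{w_y}w_t - \overline{w}w_{yt})\,dy$. I would then integrate $q\overline{w_y}w_t$ by parts in $t$, releasing the temporal boundary contribution $\tfrac{i}{2}[q\overline{w_y}w]_{t=0}^{t=\tau}$, the target term $-\tfrac{i}{2}q_t\overline{w_y}w$, and a residual $-\tfrac{i}{2}q\overline{w_{yt}}w$. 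Paired with the remaining $-\tfrac{i}{2}q\overline{w}w_{yt}$, this residual becomes $-iq\cdot\text{Re}(\overline{w_{yt}}w)$, which is $i$ times a real quantity and vanishes upon taking $\text{Re}$.

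The main obstacle is precisely the bookkeeping step in (\ref{item:one}): one must track signs, factors of $\tfrac12$, and imaginary units carefully so that the cross terms conspire into an imaginary multiple of a real quantity that is killed by $\text{Re}$, leaving only the two desired contributions. A minor technical point is the interchange of $w_{yt}$ and $w_{ty}$ and the legitimacy of integration by parts in $t$; this is justified by first establishing the identities for strong solutions with $w_0\in D(A(0))$ and then extending by density.
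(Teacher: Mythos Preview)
Your proposal is correct and follows essentially the same route as the paper: identity~(\ref{item:three}) is treated as a trivial distribution, identity~(\ref{item:two}) by the two spatial integrations by parts you describe, and identity~(\ref{item:one}) by first moving the $y$-derivative off $q_y$ and then integrating in $t$, with the leftover cross terms $-\tfrac{i}{2}q(\overline{w_{yt}}w+\overline{w}w_{yt})$ killed by taking real parts. Your remark on regularity and density is a welcome addition that the paper leaves implicit.
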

 \begin{proof}
   To prove (\ref{item:one}), we use integration by parts. Using
   $\overline{w}(0,t) = \overline{w}(1,t) = 0$, we have:
 \begin{align*}
 \tfrac12\text{Re}\Bigl(i \int_{0}^{\tau}\int_{0}^{1} q_y \cdot \overline{w}  w_t \,dy\,dt \Bigr)
 = & \; \tfrac12\text{Re}\Bigl(i \int_{0}^{\tau}\Bigl( \Bigl[\overline{w} w_t q \Bigl]_{y=0}^{y=1} - \int_{0}^{1} q \cdot (\overline{w_y}w_t+\overline{w}w_{ty})\,dy \bigr)\,dt \Bigr) \\
 = & \; -\tfrac12\text{Re}\Bigl(i \int_{0}^{\tau}\int_{0}^{1}q(\overline{w_y}w_t+\overline{w}w_{ty})\,dy \,dt \Bigr) 
 \end{align*}
 Therefore, the left hand side of (\ref{item:one}) equals
 \begin{align*}
    & \;  \text{Re}\Bigl(\int_{0}^{\tau}\int_{0}^{1}(q\overline{w_y}+\tfrac12\overline{w}q_y) iw_t \,dy\, dt \Bigr) 
  = \tfrac12\text{Re}\Bigl(i \int_{0}^{1} \int_{0}^{\tau} (w_t \cdot q\overline{w_y} - q\overline{w}w_{ty}) \,dy\,dt \Bigr) \\
 = & \; \tfrac12\text{Re}\Bigl(i \int_{0}^{1}( \Bigl[q\overline{w_y}w\Bigl]_{t=0}^{t=\tau} - \int_{0}^{\tau} w(q_t\overline{w_y}+q\overline{w_{yt}}\,dt)\,dy \Bigr) - \tfrac12\text{Re}\Bigl(\int_{0}^{\tau}\int_{0}^{1}qi\overline{w}w_{ty}) \,dy\,dt \Bigr)\\
 = & \; \tfrac12\text{Re}\Bigl(i \int_{0}^{1}( \Bigl[q\overline{w_y}w\Bigl]_{t=0}^{t=\tau}\,dy \Bigr) - \tfrac12\text{Re}\Bigl(\int_{0}^{1}\int_{0}^{\tau}iwq_t\overline{w_y} \,dy\,dt \Bigr)
 \end{align*}
 Here, we already use the fact that 
 \[
 -\text{Re}\Bigl(\int_{0}^{1}\int_{0}^{\tau}iqw\overline{w_{ty}}\Bigr) = \text{Re}\Bigl(\int_{0}^{1}\int_{0}^{\tau}iq\overline{w}w_{ty} \,dt\,dy\Bigr).
 \]
To prove (\ref{item:two}) we have 
 \begin{align*}
 \text{Re}\Bigl(\int_{0}^{\tau}\int_{0}^{1}\frac{w_{yy}}{\ell(t)^2}q\overline{w_y}) \,dy\,dt \Bigr)
 = & \; \text{Re}\Bigl(\int_{0}^{\tau}\int_{0}^{1} \tfrac{d}{dy}(|w_y|^2) \cdot \frac{1}{2\ell(t)^2}q \,dy \,dt \Bigr) \\
 = & \;  \text{Re}\Bigl(\int_{0}^{\tau}\frac{1}{2\ell(t)^2}(q(1,t)|w_y(1,t)|^2 - q(0,t)|w_y(0,t)|^2) \,dt \Bigr)\\
 - & \;  \text{Re}\Bigl(\int_{0}^{\tau}\int_{0}^{1}\frac{1}{2\ell(t)^2}q_y |w_y|^2 \,dy\,dt \Bigr) 
 \end{align*}
 since we use $\text{Re}(w_{yy}\overline{w_y}) = \text{Re}(\overline{w_{yy}}w_y)$. Again, integration by parts shows
 \begin{align*}
 & \; \text{Re}\Bigl(\int_{0}^{\tau}\int_{0}^{1}\frac{w_{yy}}{2\ell(t)^2}\overline{w}q_y \,dy\,dt\Bigr) = \text{Re}\Bigl(\int_{0}^{\tau}\int_{0}^{1}\frac{1}{2\ell(t)^2}\overline{w}q_y d(w_y) \,dt\Bigr) \\
 = & \; \text{Re}\Bigl(\int_{0}^{\tau} \Bigl( \Bigl[\frac{1}{2\ell(t)^2}\overline{w}q_y w_y \Bigl]_{y=0}^{y=1} \bigr) \,dt\Bigr) - \text{Re}\Bigl(\int_{0}^{\tau}\int_{0}^{1}\frac{1}{2\ell(t)^2}(\overline{w_y}q_y+\overline{w}q_{yy}) w_y \,dt\Bigr) \\
 = & \; - \text{Re}\Bigl(\int_{0}^{\tau}\int_{0}^{1}\frac{1}{2\ell(t)^2}(\overline{w_y}q_y+\overline{w}q_{yy}) w_y dt\Bigr) 
 \end{align*}
 Therefore we have:
 \begin{align*}
 & \; \text{Re}\Bigl(\int_{0}^{\tau}\int_{0}^{1}\frac{w_{yy}}{\ell(t)^2}(q\overline{w_y}+\tfrac12\overline{w}q_y)\,dy\,dt\Bigr) \\
 = & \; \text{Re}\Bigl(\int_{0}^{\tau}\frac{1}{2\ell(t)^2}(q(1,t)w_y^2(1,t) - q(0,t)w_y^2(0,t))\,dt\Bigr) - \text{Re}\Bigl(\int_{0}^{\tau}\int_{0}^{1}\frac{1}{\ell(t)^2} |w_y|^2q_y\,dy\,dt \Bigr)\\
 - & \;  \text{Re}\Bigl(\int_{0}^{\tau}\int_{0}^{1}\frac{w_y\overline{w}}{2\ell(t)^2}q_{yy}\,dy\,dt \Bigr)
 \end{align*}
 Hence, part  (\ref{item:two}) is proved. The last part is obvious.
 \end{proof}
Now summing up the three parts and using (\ref{multiplying-q}) yields 

\begin{proposition}\label{important-corollary}
For any real valued function $q \in C^2([0,1]\times [0, \tau])$ and a solution $w$ to (\ref{transform-eq}) we have
\begin{align*}
0
= & \; \text{Re}\Bigl(\int_{0}^{1}\tfrac{i}2 \Bigl[q\overline{w_y}w\Bigl]_{t=0}^{t=\tau} dy\Bigr) -\tfrac12 \text{Re}\Bigl(\int_{0}^{\tau}\int_{0}^{1}iwq_t\overline{w_y} \,dy \,dt\Bigr) \\
+ & \;  \text{Re}\Bigl(\int_{0}^{\tau}\frac{1}{2\ell(t)^2}(q(1,t)|w_y(1,t)|^2 - q(0,t)|w_y(0,t)|^2) \,dt \Bigr) \\
- & \;  \text{Re}\Bigl(\int_{0}^{\tau}\int_{0}^{1}\frac{1}{\ell(t)^2}|w_y|^2q_y \,dy\,dt \Bigr) - \text{Re}\Bigl(\int_{0}^{\tau}\int_{0}^{1}\frac{w_y\overline{w}}{2\ell(t)^2}q_{yy} \,dy \,dt \Bigr) \\
- & \; \text{Re}\Bigl(\int_{0}^{\tau}\int_{0}^{1}\frac{iy\ell'(t)}{\ell(t)}q|w_y|^2 \,dy\,dt \Bigr) - \text{Re}\Bigl(\int_{0}^{\tau}\int_{0}^{1}\tfrac12\frac{iy\ell'(t)}{\ell(t)}w_y\overline{w}q_y \,dy\,dt\Bigr) 
\end{align*}
\end{proposition}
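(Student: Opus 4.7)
The plan is to observe that the proposition is essentially a bookkeeping identity: its right-hand side is nothing but the sum of the three expressions produced by Lemma~\ref{lemma-multiply}, and it equals zero because the integrand in (\ref{multiplying-q}) contains the factor $iw_t + \tfrac{1}{\ell(t)^2}w_{yy} - i\tfrac{\ell'(t)}{\ell(t)}yw_y$, which vanishes identically on any solution $w$ of (\ref{transform-eq}). So the first step is simply to notice that the PDE implies $\text{Re}\bigl(\int_0^\tau\int_0^1 (q\overline{w_y}+\tfrac12\overline{w}q_y)(iw_t+\tfrac{1}{\ell(t)^2}w_{yy}-i\tfrac{\ell'(t)}{\ell(t)}yw_y)\,dy\,dt\bigr)=0$ — this is the content of (\ref{multiplying-q}).

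Next I would use linearity of the real part and of the integral to split this single expression into three pieces according to the three summands $iw_t$, $\tfrac{1}{\ell(t)^2}w_{yy}$, and $-i\tfrac{\ell'(t)}{\ell(t)}yw_y$. These are precisely the three left-hand sides appearing in identities (\ref{item:one}), (\ref{item:two}) and (\ref{item:three}) of Lemma~\ref{lemma-multiply}. Substituting the right-hand sides of those identities and adding them yields the list of seven terms in the statement of Proposition~\ref{important-corollary}.

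The step I would be most careful with is the bookkeeping: checking that the signs match between the three lemma identities and the way they are combined (in particular that (\ref{item:three}) carries a leading minus because the original factor is $-i\tfrac{\ell'(t)}{\ell(t)}yw_y$), and that no term has been double-counted. Since all individual computations have already been carried out in Lemma~\ref{lemma-multiply}, no new estimate or integration by parts is needed at this stage; the proposition follows just by addition and an application of (\ref{multiplying-q}). This leaves the nontrivial work for later, when a specific choice of the multiplier $q$ will be made in order to isolate the boundary terms $|w_y(0,t)|^2$ and $|w_y(1,t)|^2$ against the interior energy.
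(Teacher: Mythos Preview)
Your proposal is correct and matches the paper's own argument exactly: the paper simply states that summing the three identities of Lemma~\ref{lemma-multiply} and invoking (\ref{multiplying-q}) yields the proposition. Your attention to the sign in (\ref{item:three}) is appropriate, and no further work is needed.
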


\subsection{Energy estimates}
For a solution $w$ to (\ref{transform-eq}) we define the first and second energy as
\[
E(t) = \tfrac12\int_{0}^{1}|w(y,t)|^2 dy\qquad\text{and}\qquad F(t) = \tfrac12\int_{0}^{1}|w_y(y,t)|^2 dy
\]
respectively. 

\begin{lemma}\label{first-energy-decay}
  We have $\ell(\tau) E(\tau) = E(0)$.
\end{lemma}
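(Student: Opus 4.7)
The plan is to show that the quantity $t \mapsto \ell(t) E(t)$ has vanishing time derivative, so it is constant and equals its value at $t=0$, namely $\ell(0) E(0) = E(0)$.

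The first step is to differentiate under the integral sign:
\[
\tfrac{d}{dt}\bigl[\ell(t) E(t)\bigr]
= \tfrac{\ell'(t)}{2}\int_0^1 |w|^2 \, dy
+ \ell(t)\,\text{Re}\!\int_0^1 w_t \overline{w}\, dy.
\]
I would then substitute $w_t = \tfrac{i}{\ell(t)^2}w_{yy} + \tfrac{\ell'(t)}{\ell(t)}y w_y$ from (\ref{transform-eq}). This splits the second integral into two pieces. In the first piece, an integration by parts with Dirichlet data $w(0,t)=w(1,t)=0$ turns $\int_0^1 w_{yy}\overline{w}\,dy$ into $-\int_0^1|w_y|^2\,dy$, which is real, so after multiplication by the purely imaginary factor $i/\ell(t)^2$ its contribution to $\text{Re}(\cdot)$ vanishes.

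The second piece, $\tfrac{\ell'(t)}{\ell(t)}\int_0^1 y w_y \overline{w}\,dy$, is handled by the same integration by parts already used in (\ref{operator coercive}): writing $y(w_y\overline{w} + w\overline{w_y}) = \tfrac{d}{dy}(y|w|^2) - |w|^2$ and using $w(1,t)=0$ yields
\[
\text{Re}\!\int_0^1 y w_y \overline{w}\,dy = -\tfrac{1}{2}\int_0^1 |w|^2\,dy.
\]
Plugging this back in gives
\[
\tfrac{d}{dt}\bigl[\ell(t) E(t)\bigr]
= \tfrac{\ell'(t)}{2}\int_0^1 |w|^2\,dy
+ \ell(t)\cdot\Bigl(-\tfrac{\ell'(t)}{2\ell(t)}\Bigr)\int_0^1 |w|^2\,dy = 0,
\]
so $\ell(t)E(t)$ is constant on $[0,\tau]$ and evaluating at $t=0$ (where $\ell(0)=1$) proves the claim.

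There is no real obstacle here: the identity simply reflects conservation of the physical $L^2$-mass $\int_0^{\ell(t)}|u(x,t)|^2 dx$ for the Dirichlet Schrödinger flow on the moving domain, which under the change of variables $y = x/\ell(t)$ becomes $\ell(t)\int_0^1|w(y,t)|^2\,dy = 2\ell(t)E(t)$. One could alternatively derive the lemma from this physical conservation law (noting that the boundary term $\ell'(t)|u(\ell(t),t)|^2$ vanishes by the Dirichlet condition), but the direct computation on $w$ fits the framework of the transformed equation used throughout the paper.
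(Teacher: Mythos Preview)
Your proof is correct and follows essentially the same approach as the paper: both substitute the PDE for $w_t$, use integration by parts with the Dirichlet conditions to kill the $w_{yy}$ contribution and to evaluate $\text{Re}\int_0^1 y w_y\overline{w}\,dy = -\tfrac12\int_0^1|w|^2\,dy$. The only cosmetic difference is that the paper differentiates $E(t)$ alone to obtain $E'(t)=-\tfrac{\ell'(t)}{\ell(t)}E(t)$ and then solves this ODE, whereas you differentiate the product $\ell(t)E(t)$ directly and show it vanishes; these are of course equivalent.
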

\begin{proof}
Taking the derivative respected to $t$ and using \ref{transform-eq}, we have
\begin{align*}
\frac{d E(t)}{dt}
= & \; \frac{d}{dt} \tfrac{1}{2}\int_{0}^{1}|w(y,t)|^2  dy
=  \;  \tfrac{1}{2}\int_{0}^{1}(w_t\overline{w} + w\overline{w_t}) dy\\
= & \; \tfrac{1}{2}\int_{0}^{1}\bigl(\tfrac{i}{\ell(t)^2}w_{yy}+\tfrac{\ell'(t)}{\ell(t)}yw_y\bigr)\overline{w} + w\overline{\bigl(\tfrac{i}{\ell(t)^2}w_{yy}+\tfrac{\ell'(t)}{\ell(t)}yw_y\bigr)} \\
= & \; \tfrac{1}{2}\int_{0}^{1}\bigl(\tfrac{i}{\ell(t)^2}(w_{yy}\overline{w}-\overline{w_{yy}}w) + \tfrac{\ell'(t)}{\ell(t)}y(w_y\overline{w} + \overline{w_y}w) \bigr) dy 
\end{align*}
Now integration by parts gives
\begin{align*}
\int_{0}^{1} \tfrac{i}{\ell(t)^2}(w_{yy}\overline{w}-\overline{w_{yy}}w) \,dy 
= & \; \int_{0}^{1} \tfrac{i}{\ell(t)^2} \overline{w}d(w_y) - \int_{0}^{1}\tfrac{i}{\ell(t)^2}w d(\overline{w_y}) \\
= & \; \tfrac{i}{\ell(t)^2}\bigl( \Bigl[\overline{w}w_y\Bigl]_{y=0}^{y=1} - \int_{0}^{1}|w_y|^2 \bigr) - \tfrac{i}{\ell(t)^2}\bigl(\Bigl[w\overline{w_y}\bigl]_{y=0}^{y=1} - \int_{0}^{1}|w_y|^2 \bigr) \\
= & \; 0
\end{align*}
whereas
\begin{align*}
& \; \int_{0}^{1} \tfrac{\ell'(t)}{\ell(t)}y(w_y\overline{w} + \overline{w_y}w) \,dy 
= \int_{0}^{1}\tfrac{\ell'(t)}{\ell(t)}y\overline{w}d(w) - \int_{0}^{1}\tfrac{\ell'(t)}{\ell(t)}ywd(\overline{w}) \\
= & \; \Bigl[\tfrac{\ell'(t)}{\ell(t)}y\overline{w}w\Bigl]_{y=0}^{y=1} - \tfrac{\ell'(t)}{\ell(t)}\int_{0}^{1}(\overline{w}+y\overline{w_y})w \,dy +
       \Bigl[\tfrac{\ell'(t)}{\ell(t)}y\overline{w}w\Bigl]_{y=0}^{y=1} - \tfrac{\ell'(t)}{\ell(t)}\int_{0}^{1}(w+yw_y)\overline{w} \,dy \\
= & \; -\tfrac{2\ell'(t)}{\ell(t)}\int_{0}^{1}|w(y,t)|^2 dy - \int_{0}^{1} \tfrac{\ell'(t)}{\ell(t)}y(w_y\overline{w} + \overline{w_y}w) \,dy .
\end{align*}
Therefore, 
\[
\int_{0}^{1} \tfrac{\ell'(t)}{\ell(t)}y(w_y\overline{w} + \overline{w_y}w) dy = -\tfrac{\ell'(t)}{\ell(t)}\int_{0}^{1}|w(y,t)|^2 dy,
\]
so that
\[
\tfrac{d E(t)}{dt} = - \tfrac{1}{2} \int_{0}^{1}\tfrac{\ell'(t)}{\ell(t)}|w(y,t)|^2 dy = -\tfrac{\ell'(t)}{\ell(t)}E(t).
\]
Using $\ell(0)=1$, this implies easily $E(\tau) = \tfrac{E(0)}{\ell(\tau)}$.
\end{proof}

\begin{lemma}\label{second-energy-decay}
For all $\tau >0$ and $\tau \in \Bigl(0,\tfrac{\pi}{2\omega}\Bigr)$, we have:
\[
\tfrac{\pi^2}{\ell(\tau)}E(0)  \leq F(\tau) \leq \ell(\tau) F(0)
\]
\end{lemma}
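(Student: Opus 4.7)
The plan is to establish both inequalities by computing the time derivative $F'(t)$ along a solution to (\ref{transform-eq}) and then applying the Poincaré inequality together with Lemma~\ref{first-energy-decay}.

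\textbf{Step 1 (upper bound).} First I would differentiate the second energy in time, writing $F'(t) = \text{Re}\int_{0}^{1} w_{ty}\,\overline{w_y}\,dy$. Using the evolution equation from (\ref{transform-eq}) in the form $w_t = \tfrac{i}{\ell(t)^2}w_{yy} + \tfrac{\ell'(t)}{\ell(t)}\,y\,w_y$, I differentiate once in $y$ to express $w_{ty}$. The contribution of $\tfrac{i}{\ell(t)^2}w_{yyy}\overline{w_y}$ is treated by integration by parts: the resulting volume integral $i\int_0^1|w_{yy}|^2\,dy$ is purely imaginary and hence vanishes after taking the real part, while the boundary term $\bigl[w_{yy}\overline{w_y}\bigr]_{y=0}^{y=1}$ survives. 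To evaluate $w_{yy}$ at the endpoints I use that the Dirichlet conditions $w(0,t)=w(1,t)=0$ force $w_t(0,t)=w_t(1,t)=0$; substituting back into the PDE yields $w_{yy}(0,t)=0$ and $w_{yy}(1,t) = i\,\ell(t)\ell'(t)\,w_y(1,t)$. For the remaining contribution $\tfrac{\ell'(t)}{\ell(t)}\int_0^1 y\,w_{yy}\overline{w_y}\,dy$ I rewrite $\text{Re}(w_{yy}\overline{w_y}) = \tfrac12\partial_y|w_y|^2$ and perform one further integration by parts. Combining all the pieces I expect the identity
\[
F'(t) = \frac{\ell'(t)}{\ell(t)}\,F(t) - \frac{\ell'(t)}{2\ell(t)}\,|w_y(1,t)|^2.
\]
Since (\ref{condition on the curve}) gives $\ell'(t) > 0$, the boundary term is non-positive, so $F'(t) \leq \tfrac{\ell'(t)}{\ell(t)}F(t)$. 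Integrating this differential inequality from $0$ to $\tau$ and using $\ell(0)=1$ yields $F(\tau)\leq \ell(\tau)F(0)$.

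\textbf{Step 2 (lower bound).} Since $w(\cdot,\tau)\in H_0^1(0,1)$, the Poincaré inequality $\int_0^1|v|^2\,dy \leq \tfrac{1}{\pi^2}\int_0^1|v_y|^2\,dy$ applied to $v = w(\cdot,\tau)$ gives $F(\tau)\geq \pi^2 E(\tau)$. Combining this with Lemma~\ref{first-energy-decay}, which provides $E(\tau) = E(0)/\ell(\tau)$, immediately produces $F(\tau) \geq \tfrac{\pi^2}{\ell(\tau)}E(0)$.

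\textbf{Main obstacle.} The delicate step is the computation of $F'(t)$: the boundary values of $w_{yy}$ must be extracted from the PDE at $y=0,1$ using the Dirichlet condition and its time derivative, and the real parts of the various imaginary boundary contributions must be tracked carefully. A sign mistake there would destroy the favorable sign of the term $-\tfrac{\ell'(t)}{2\ell(t)}|w_y(1,t)|^2$ that is essential for the Gronwall argument. Once that identity is in hand, both bounds follow by short and standard arguments.
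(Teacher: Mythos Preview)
Your proposal is correct and follows essentially the same route as the paper: both compute $F'(t)=\tfrac{\ell'(t)}{\ell(t)}\bigl(F(t)-\tfrac12|w_y(1,t)|^2\bigr)$ by differentiating under the integral, substituting the PDE, and handling the boundary terms via $w_t=0$ at $y=0,1$; then the upper bound comes from dropping the non-positive boundary contribution (the paper writes out the variation-of-constants formula, you use the equivalent Gronwall form), and the lower bound from Poincar\'e combined with Lemma~\ref{first-energy-decay}.
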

\begin{proof}
Concerning $F$ we have 
\begin{align*}
\frac{d F(t)}{dt}
= & \; \frac{d}{dt} \tfrac12\int_{0}^{1}|w_y(y,t)|^2  
=  \;  \tfrac12\int_{0}^{1}(w_{yt}\overline{w_y} + w_y\overline{w_{yt}})\,dt\\
= & \; \tfrac12\int_{0}^{1}\bigl(\tfrac{i}{\ell(t)^2}w_{yy}+\tfrac{\ell'(t)}{\ell(t)}yw_y\bigr)_y\overline{w_y} + w_y\overline{\bigl(\tfrac{i}{\ell(t)^2}w_{yy}+\tfrac{\ell'(t)}{\ell(t)}yw_y\bigr)_y} \\
= & \; \tfrac{i}{2\ell(t)^2}\int_{0}^{1}(w_{yyy}\overline{w_y}-\overline{w_{yyy}}w_y)\,dy + \tfrac{\ell'(t)}{2\ell(t)}\int_{0}^{1}((yw_y)_y\overline{w_y} + w_y\overline{(yw_y)_y})\,dy .
\end{align*}
The first term on the right hand side simplifies as
\begin{align*}
  & \;  \tfrac{i}{2\ell(t)^2}\int_{0}^{1}(w_{yyy}\overline{w_y}-\overline{w_{yyy}}w_y) \,dy \\ 
= & \; \tfrac{i}{2\ell(t)^2}\int_{0}^{1}\overline{w_y}\,d(w_{yy}) - \tfrac{i}{2\ell(t)^2}\int_{0}^{1}w_y\,d(\overline{w_{yy}}) \\
= & \; \tfrac{i}{2\ell(t)^2} \Bigl[\overline{w_y}w_{yy}\Bigl]_{y=0}^{y=1}- \tfrac{i}{2\ell(t)^2}\int_{0}^{1}|w_{yy}|^2 \,dy - \tfrac{i}{2\ell(t)^2}\Bigl[\overline{w_{yy}}w_{y}\Bigl]_{y=0}^{y=1} + \tfrac{i}{2\ell(t)^2}\int_{0}^{1}|w_{yy}|^2 \,dy \\
= & \; \Bigl[\tfrac12\overline{w_y}(w_t-\tfrac{\ell'(t)}{\ell(t)}yw_y)\Bigl]_{y=0}^{y=1} 
     + \Bigl[\tfrac12w_y(\overline{w_t}-\tfrac{l'(t)}{l(t)}y\overline{w_y})\Bigl]_{y=0}^{y=1}  =  - \tfrac{\ell'(t)}{\ell(t)}|w_y(1,t)|^2
\end{align*}
whereas the second term simplifies as follows.
\begin{align*}
\tfrac{\ell'(t)}{2\ell(t)}\int_{0}^{1}((yw_y)_y\overline{w_y} + w_y\overline{(yw_y)_y})\,dy
= & \; \tfrac{\ell'(t)}{2\ell(t)}\int_{0}^{1}(w_y+yw_{yy})\overline{w_y}+w_y(\overline{w_y+yw_{yy}})\,dy \\
= & \; \tfrac{\ell'(t)}{2\ell(t)}\int_{0}^{1} 2|w_y|^2 + y(w_{yy}\overline{w_y}+w_y\overline{w_{yy}})\,dy \\
= & \; \tfrac{\ell'(t)}{\ell(t)}\int_{0}^{1} |w_y|^2\,dy +  \tfrac{\ell'(t)}{2\ell(t)}\int_{0}^{1} y\,d(|w_y|^2) \\
= & \; \tfrac{\ell'(t)}{2\ell(t)}\int_{0}^{1} |w_y|^2\,dy + \tfrac{\ell'(t)}{2\ell(t)}|w_y(1,t)|^2.
\end{align*}
We add both parts to obtain
\begin{align*}
\frac{d F(t)}{dt}
= & \; \tfrac{\ell'(t)}{2\ell(t)}\int_{0}^{1}|w_y(y,t)|^2\,dt   -  \tfrac12|w_y(1,t)|^2\tfrac{\ell'(t)}{\ell(t)} \\
= & \; \tfrac{\ell'(t)}{\ell(t)} \Bigl(F(t) -\tfrac12|w_y(1,t)|^2 \Bigr),
\end{align*}
By Variation of constants, we get an explicit solution:
\begin{equation}\label{equation for F}
F(t) = \ell(t)F(0) - \ell(t)\int_{0}^{t}\tfrac{\ell'(s)}{2\ell(s)^2}|w_y(1,s)|^2\,ds
\end{equation}
%
One easily obtains an upper bound, namely $F(t) \le F(0) \ell(t)$. For the lower bound,
we use the Poincaré (or Wirtinger) inequality on $[0,1]$ to obtain,
\begin{equation}\label{bounded-for-F}
  F(t) = \tfrac12 \int_{0}^{1}|w_y(y,t)|^2 \, dy \geq  \tfrac{\pi^2}{2}\int_{0}^{1}|w(y,t)|^2 \,dy = \tfrac{\pi^2}{\ell(t)}E(0) 
\end{equation}
\end{proof}

\subsection{Admissibility of Neumann observations at the boundary}
\begin{proof}[Proof of Theorem~\ref{thm:obs-inequality}]
We take the function $q(y,t) = q(y)$ on $(0,1)$ satisfying $q(1) = 0$ and
$q(0) = 1$. By Proposition~\ref{important-corollary}, we have
\begin{align*}
& \; \text{Re}\Bigl(\int_{0}^{\tau}\tfrac{1}{2\ell(t)^2}q(0,t)|w_y(0,t)|^2 \,dt\Bigr) = \text{Re}\Bigl(\int_{0}^{1}  \Bigr[\tfrac12iq\overline{w_y}w\Bigr]_{t=0}^{t=\tau} \,dy\Bigr) \\
- & \;\text{Re}\Bigl(\int_{0}^{\tau}\int_{0}^{1}\tfrac{1}{\ell(t)^2}|w_y|^2q_y\,dy\,dt \Bigr) - \text{Re}\Bigl(\int_{0}^{\tau}\int_{0}^{1}\frac{w_yw}{2\ell(t)^2}q_{yy} \,dy \,dt \Bigr) \\
- & \; \text{Re}\Bigl(\int_{0}^{\tau}\int_{0}^{1}\tfrac{iy\ell'(t)}{\ell(t)}q|w_y|^2 \,dy\,dt \Bigr) - \text{Re}\Bigl(\int_{0}^{\tau}\int_{0}^{1}\tfrac12\tfrac{iy\ell'(t)}{\ell(t)}w_y\overline{w}q_y \,dy\,dt\Bigr)
\end{align*}
Therefore, we have 
\[
  \int_{0}^{\tau}\frac{1}{2\ell(t)^2}|w_y(0,t)|^2 \,dt \leq A+B+C+D+E,
\]
where we estimate all five terms separately. Concerning $A$, we
separate the products in the real part by $ab \le \tfrac12(a^2+ b^2)$,
then use Lemmata~\ref{first-energy-decay} and
\ref{second-energy-decay} to obtain
\begin{align*}
& \;  A = \Bigl|\text{Re}\Bigl(\int_{0}^{1} \Bigr[\tfrac12iq\overline{w_y}w\Bigr]_{t=0}^{t=\tau} \,dy\Bigr| \\
\leq & \; \tfrac{1}{4} \norm{ q }_{L_{\infty}(0,1)}\Bigl(\int_{0}^{1}|w(y,\tau)|^2+|w(y,0)|^2+|w_y(y,\tau)|^2+|w_y(y,0)|^2 \,dy\Bigr) \\
= & \; \tfrac{1}{4} \norm{ q }_{L_{\infty}(0,1)}\Bigl(\int_{0}^{1}\Bigl(\tfrac{1}{\ell(\tau)}+1\Bigr)|w(y,0)|^2+(1+\ell(\tau))|w_y(y,0)|^2 \,dy\Bigr) \\
\leq & \; \tfrac{1}{4} \norm{ q }_{L_{\infty}(0,1)}\Bigl(\int_{0}^{1}\Bigl(\tfrac{\pi^2}{\ell(\tau)}+\pi^2 +1+\ell(\tau)\Bigr)|w_y(y,0)|^2 \,dy\Bigr).
\end{align*}
The second term is easily estimated by Lemma~\ref{first-energy-decay}:
\begin{align*}
B = \Bigl|\text{Re}\Bigl(\int_{0}^{\tau}\int_{0}^{1}\tfrac{1}{\ell(t)^2}|w_y(y, t)|^2q_y\,dy\,dt \Bigr)\Bigr| 
\leq & \; \norm{ q_y }_{L_{\infty}(0,1)}\int_{0}^{\tau}\int_{0}^{1}\tfrac{1}{\ell(t)^2}|w_y(y,t)|^2 \,dy\,dt \\
\leq & \;\norm{ q_y }_{L_{\infty}(0,1)}  \int_{0}^{\tau} \int_{0}^{1}\tfrac{1}{\ell(t)}|w_y(y,0)|^2 \dy\dt.
\end{align*}
Part $C$ is decoupled by Cauchy-Schwarz and then estimated using Lemma~\ref{second-energy-decay} as follows:
\begin{align*}
& \; C = \Bigl|\text{Re}\Bigl(\int_{0}^{\tau}\int_{0}^{1}\frac{w_yw}{2\ell(t)^2}q_{yy} \,dy \,dt \Bigr) \Bigr| \leq  \norm{ q_{yy} }_{L_{\infty}(0,1)}\Bigl(\int_{0}^{\tau}\int_{0}^{1}\frac{|w_yw|}{2\ell(t)^2}\,dy \,dt \Bigr) \\
\leq & \; \norm{ q_{yy} }_{L_{\infty}(0,1)}\Bigl(\int_{0}^{\tau}\tfrac{1}{2\ell(t)^2}\Bigl(\int_{0}^{1}|w(y,t)|^2\,dy\Bigr)^{{\nicefrac{1}{2}}}\Bigl(\int_{0}^{1}|w_y(y,t)|^2\,dy\Bigr)^{{\nicefrac{1}{2}}}\,dt\Bigr)\\
\leq & \; \norm{ q_{yy} }_{L_{\infty}(0,1)}\Bigl(\int_{0}^{\tau}\tfrac{\pi}{2\ell(t)^2}\Bigl(\int_{0}^{1}|w_y(y,t)|^2\,dy\Bigr)\,dt\Bigr)\\
\leq & \; \norm{ q_{yy} }_{L_{\infty}(0,1)}\Bigl(\int_{0}^{\tau}\tfrac{\pi}{2\ell(t)}\,dt\Bigr)\Bigl(\int_{0}^{1}|w_y(y,0)|^2\,dy\Bigr).
\end{align*}
For the forth part, we use Lemma~\ref{second-energy-decay} to obtain
\begin{align*}
D = \Bigl| \text{Re}\Bigl(\int_{0}^{\tau}\int_{0}^{1}\tfrac{iy\ell'(t)}{\ell(t)}q|w_y(y,t)|^2 \,dy\,dt \Bigr)\Bigr| \leq 
& \; \norm{q}_{L_{\infty}(0,1)}\int_{0}^{\tau}\int_{0}^{1}\tfrac{\ell'(t)}{\ell(t)}|w_y(y,t)|^2 \,dy\,dt \\
\leq & \; \norm{q}_{L_{\infty}(0,1)}\Bigl(\int_{0}^{\tau}\ell'(t)\,dt\Bigr)\Bigl(\int_{0}^{1}|w_y(y,0)|^2\,dy\Bigr) \\
= & \; \norm{q}_{L_{\infty}(0,1)}(\ell(\tau)-1)\Bigl(\int_{0}^{1}|w_y(y,0)|^2\,dy\Bigr).
\end{align*}
Finally, part $E$ is treated like part $C$:
\begin{align*}
& \; E = \Bigl| \text{Re}\Bigl(\int_{0}^{\tau}\int_{0}^{1}\tfrac12\frac{iy\ell'(t)}{\ell(t)}w_y\overline{w}q_y \,dy\,dt\Bigr)\Bigr| \leq \norm{ q_{y} }_{L_{\infty}(0,1)}\Bigl(\int_{0}^{\tau}\int_{0}^{1}\tfrac12\frac{\ell'(t)}{\ell(t)}|w_y||\overline{w}|\,dy\,dt\Bigr) \\
\leq & \; \norm{ q_{y} }_{L_{\infty}(0,1)}\Bigl(\int_{0}^{\tau}\tfrac{\ell'(t)}{2\ell(t)}\Bigl(\int_{0}^{1}|w(y,t)|^2\,dy\Bigr)^{{\nicefrac{1}{2}}}\Bigl(\int_{0}^{1}|w_y(y,t)|^2\,dy\Bigr)^{{\nicefrac{1}{2}}}\,dt\Bigr)\\
\leq & \; \norm{ q_{y} }_{L_{\infty}(0,1)}\Bigl(\int_{0}^{\tau}\tfrac{\pi\ell'(t)}{2\ell(t)}\Bigl(\int_{0}^{1}|w_y(y,t)|^2\,dy\Bigr)\,dt\Bigr)\\
\leq & \; \norm{ q_{y} }_{L_{\infty}(0,1)}\Bigl(\int_{0}^{\tau}\tfrac{\pi\ell'(t)}{2}\,dt\Bigr)\Bigl(\int_{0}^{1}|w_y(y,0)|^2\,dy\Bigr) \\
= & \; \norm{ q_{y} }_{L_{\infty}(0,1)}\tfrac{\pi}{2}(\ell(\tau)-1)\Bigl(\int_{0}^{1}|w_y(y,0)|^2\,dy\Bigr).
\end{align*}
Summing up all three estimates, we obtain
\begin{equation}\label{upper-admiss}
\int_{0}^{\tau}\frac{1}{2\ell(t)^2}\bigl|w_y(0,t)\bigr|^2 \,dt \leq C_1(\tau)\norm{ w_0 }_{H_0^1(0,1)}^2
\end{equation}
where the constant $C_1(\tau)$ is given by
\begin{equation}\label{eq:C1-computation}
\begin{aligned}
C_1(\tau) 
= & \; \frac{5\ell(\tau)^2+(\pi^2-3)\ell(\tau)+\pi^2}{4\ell(\tau)}\norm{q}_{L_{\infty}(0,1)}+\Bigl(\frac{\pi}{2}(\ell(\tau)-1)+\int_{0}^{\tau}\frac{dt}{\ell(t)}\Bigr)\norm{q_y}_{L_{\infty}(0,1)} \\
+ & \; \Bigl(\int_{0}^{\tau}\frac{\pi}{2\ell(t)}\,dt\Bigr)\norm{q_{yy}}_{L_{\infty}(0,1)}
\end{aligned}
\end{equation}
Replacing $w_y(0,t) = \ell(t)u_x(0,t)$ in $(\ref{upper-admiss})$ yields the admissibility inequality:
\[
\int_{0}^{\tau}\bigl|u_x(0,t)\bigr|^2 \,dt \leq 2C_1(\tau)\norm{ u_0 }_{H_0^1(0,1)}^2
\]
The second admissibility estimate follows the same lines, using $q(y,t) = q(y)$ on $(0,1)$ with $q(0) = 0$ and $q(1) = 1$.
\end{proof}


\subsubsection{Neumann Observability at the Boundary}
Recall the following lemma

\begin{lemma}\label{lem:unique-compactness}
Let $E_1, E_2$ and $E_3$ be the Hilbert spaces. We consider the continuous linear operators $T: E_1 \rightarrow E_2$, $K: E_1 \rightarrow E_3$ and $L: E_1 \rightarrow E_1$ such that $K$ is compact, $L$ is bounded below and:
\begin{equation}
\norm{Lu}_{E_1} \approx \norm{Tu}_{E_2}+\norm{Ku}_{E_3}
\end{equation}
Then the kernel of $A$ has finite dimension and $\norm{Lu}_{E_1} \approx \norm{Tu}_{E_3}$
\end{lemma}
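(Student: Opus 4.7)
The plan is a standard Riesz-type compactness/contradiction argument. Since $L$ is bounded below, there exists $c_0>0$ with $c_0\norm{u}_{E_1}\le\norm{Lu}_{E_1}$, so combining with the hypothesis yields
\[
  c_0\,\norm{u}_{E_1} \;\lesssim\; \norm{Tu}_{E_2}+\norm{Ku}_{E_3}.
\]
The game is therefore to absorb the compact term $Ku$ into $Tu$, at the cost of accepting a finite-dimensional defect supported on $\ker T$.

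The key technical statement I would establish first is the following Cauchy principle: any bounded sequence $(u_n)\subset E_1$ with $\norm{Tu_n}_{E_2}\to 0$ admits a subsequence converging in $E_1$ to a limit lying in $\ker T$. Indeed, compactness of $K$ allows extraction so that $(Ku_n)$ is Cauchy in $E_3$; applying the estimate above to differences then gives
\[
  c_0\,\norm{u_n-u_m}_{E_1}\;\lesssim\;\norm{T(u_n-u_m)}_{E_2}+\norm{K(u_n-u_m)}_{E_3}\;\longrightarrow\;0,
\]
so the extracted subsequence is Cauchy in $E_1$, and its limit $u$ satisfies $Tu=0$ by continuity of $T$.

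From this Cauchy principle I would deduce the two conclusions in parallel. Applied to any bounded sequence inside $\ker T$, it shows that the closed unit ball of $\ker T$ is relatively compact in $E_1$, and F. Riesz's theorem forces $\dim\ker T<\infty$. Applied on $(\ker T)^{\perp}$ in combination with a contradiction argument, it yields $\norm{u}_{E_1}\lesssim\norm{Tu}_{E_2}$ there: if this failed one could pick $u_n\in(\ker T)^{\perp}$ with $\norm{u_n}_{E_1}=1$ and $\norm{Tu_n}_{E_2}\to 0$; the principle produces a limit $u$ with $\norm{u}_{E_1}=1$ and $Tu=0$, and since $(\ker T)^{\perp}$ is closed in the Hilbert space $E_1$ we also have $u\in(\ker T)^{\perp}$, forcing $u=0$, a contradiction. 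Combined with the obvious reverse bounds coming from the continuity of $T$ and from $\norm{u}_{E_1}\lesssim\norm{Lu}_{E_1}$, this produces the announced equivalence $\norm{Lu}_{E_1}\approx\norm{Tu}_{E_2}$ on $(\ker T)^{\perp}$.

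The main obstacle is almost entirely bookkeeping rather than hard analysis: in the concrete application of the lemma the finite-dimensional kernel must then be ruled out by an \emph{independent} unique-continuation (or Holmgren-type) argument on the underlying PDE. This is precisely the ``uniqueness'' half of the ``uniqueness--compactness'' method, and once $\ker T=\{0\}$ has been verified in this way, the preceding equivalence collapses to $\norm{Lu}_{E_1}\approx\norm{Tu}_{E_2}$ on all of $E_1$, as stated.
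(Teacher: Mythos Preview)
Your argument is correct and is precisely the standard Peetre--Tartar proof of this lemma; the paper itself gives no argument at all, merely citing \cite[Lemma~1]{Tartar} and remarking that one replaces $u$ by $Lu$. So in substance you and the paper take the same route, except that you have written out what the citation hides.

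One point worth flagging explicitly (you do address it, but it deserves emphasis): the lemma as printed in the paper is slightly overstated. The conclusion $\norm{Lu}_{E_1}\approx\norm{Tu}_{E_2}$ cannot hold on all of $E_1$ unless $\ker T=\{0\}$, since $L$ is injective while $T$ need not be. Your proof correctly yields the equivalence only on $(\ker T)^{\perp}$, and you are right that the passage to all of $E_1$ requires the separate unique-continuation input. In the paper's application this is exactly what happens: injectivity of $T$ is checked afterwards via a Carleman/Holmgren-type result, and only then is the full equivalence claimed. So your final paragraph is not an afterthought but an essential part of matching the lemma to its stated conclusion.
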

\begin{proof}
A similar proof can be found in \cite[Lemma 1 pp.1]{Tartar} where we just replace $u$ by $Lu$.
\end{proof}

\begin{proof}[Proof of Theorem~\ref{thm:obs-inequality}]
 For all $\tau$ satisfying  $2\tau+\pi(1-\ell(\tau)^2) > 0$, we choose two positive constants $\eta(\tau) $ and $\delta(\tau)$ such that:
\begin{equation}\label{assumption-on-tau}
\eta(\tau) + \delta(\tau) < \tfrac{4}{1+\ell(\tau)^3}\Bigl(\tau-\tfrac{\pi}{2}(\ell(\tau)^2-1)\Bigr)
\end{equation}
 We choose $q(y) = (1-y)\ell(t)$ where $y \in (0,1)$. Proposition~\ref{important-corollary} is then equivalent to:
 \begin{equation}\label{q(y)-=-1-y}
\begin{aligned}
\int_{0}^{\tau}\frac{1}{2\ell(t)}\bigl|w_y(0,t)\bigr|^2 \,dt 
= & \; \int_{0}^{\tau}\int_{0}^{1}\frac{1}{\ell(t)}|w_y|^2 \,dy\,dt - \text{Re}\Bigl(\int_{0}^{\tau}\int_{0}^{1}\tfrac12 i(1-y)\ell'(t)\overline{w}_yw \,dy\,dt\Bigr)  \\
+ & \; \text{Re}\Bigl(\int_{0}^{1} \Bigr[\tfrac12i(1-y)\ell(t)\overline{w_y}w\Bigr]_{t=0}^{t=\tau} dy\Bigr)+\text{Re}\Bigl(\int_{0}^{\tau}\int_{0}^{1}\tfrac12 iy\ell'(t)w_y\overline{w} \,dy\,dt\Bigr) 
\end{aligned}
\end{equation}
Taking the three last formula of the right hand side to the left, then taking the absolute to get:
\begin{align*}
\int_{0}^{\tau}\int_{0}^{1}\frac{1}{\ell(t)}|w_y|^2 \,dy\,dt
\leq  & \; \int_{0}^{\tau}\frac{1}{2\ell(t)}\bigl|w_y(0,t)\bigr|^2 \,dt + \Bigl|\text{Re}\Bigl(\int_{0}^{1} \Bigr[\tfrac12i(1-y)\ell(t)\overline{w_y}w\Bigr]_{t=0}^{t=\tau} dy\Bigr)\Bigr| \\
+ & \; \Bigl|\text{Re}\Bigl(\int_{0}^{\tau}\int_{0}^{1}\tfrac12 i(1-y)\ell'(t)\overline{w}_yw \,dy\,dt\Bigr)\Bigr| \\
+ & \; \Bigl|\text{Re}\Bigl(\int_{0}^{\tau}\int_{0}^{1}\tfrac12 iy\ell'(t)w_y\overline{w} \,dy\,dt\Bigr)\Bigr|
\end{align*}
The sum of third and fourth terms in the right hand side of above formula can be estimated as:
\begin{align*}
& \; \Bigl|\text{Re}\Bigl(\int_{0}^{\tau}\int_{0}^{1}\tfrac12 i(1-y)\ell'(t)\overline{w}_yw \,dy\,dt\Bigr)\Bigr| + \Bigl|\text{Re}\Bigl(\int_{0}^{\tau}\int_{0}^{1}\tfrac12 iy\ell'(t)w_y\overline{w} \,dy\,dt\Bigr)\Bigr| \\
\leq  & \; \tfrac{1}{2}\int_{0}^{\tau}\int_{0}^{1}\ell'(t)|w_y\overline{w} |\,dy\, dt + \tfrac{1}{2}\int_{0}^{\tau}\int_{0}^{1}\ell'(t)|\overline{w}_yw|\,dy\,dt \\
\leq & \; \int_{0}^{\tau}\ell'(t)\Bigl(\int_{0}^{1}|w|^2 \,dy\Bigr)^{1/2}\Bigl(\int_{0}^{1}|w_y|^2 \,dy\Bigr)^{1/2} \,dt  \\
\leq & \; \int_{0}^{\tau}\pi\ell'(t)\Bigl(\int_{0}^{1}|w_y|^2 \,dy\Bigr)\,dt
\end{align*}
Due to the energy estimate in lemma \ref{first-energy-decay} and \ref{second-energy-decay}, we have the upper bound for the second term:
\begin{align*}
& \; \Bigl|\text{Re}\Bigl(\int_{0}^{1} \Bigr[\tfrac12i(1-y)\ell(t)\overline{w_y}w\Bigr]_{t=0}^{t=\tau}\,dy\Bigr)\Bigr| \\
\leq & \; \tfrac{1}{4}\int_{0}^{1}\Bigl(\frac{|w(y,0)|^2}{\eta(\tau)}+\frac{|w(y,\tau)|^2}{\eta(\tau)}+\eta(\tau)|w_y(y,0)|^2+\eta(\tau)\ell(\tau)^2|w_y(y,\tau)|^2\Bigr)\,dy \\
\leq & \; \tfrac{1}{4\eta(\tau)}\Bigl(\tfrac{1}{\ell(\tau)}+1\Bigr)\int_{0}^{1}|w(y,0)|^2\,dy + \tfrac{(1+\ell(\tau)^3)\eta(\tau)}{4}\int_{0}^{1} |w_y(y,0)|^2\,dy \\
\end{align*}
As a result, we combine these estimation and use (\ref{equation for F}) to obtain:
\begin{align*}
& \; \int_{0}^{\tau}\tfrac{1}{2\ell(t)}|w_y(0,t)|^2 \,dt + \tfrac{1}{4\eta(\tau)}\Bigl(\tfrac{1}{\ell(\tau)}+1\Bigr)\int_{0}^{1}|w(y,0)|^2 \,dy \\
\geq  & \; \int_{0}^{\tau}\int_{0}^{1}\Bigl(\tfrac{1}{\ell(t)}-\pi\ell'(t)\Bigr)|w_y(y,t)|^2 \,dy\,dt - \tfrac{(1+\ell(\tau)^3)\eta(\tau)}{4}\int_{0}^{1} |w_y(y,0)|^2\,dy  \\
=  & \; \Bigl(\int_{0}^{\tau}(1-\pi\ell'(t)\ell(t))\,dt-\tfrac{(1+\ell(\tau)^3)\eta(\tau)}{4}\Bigr)\Bigl(\int_{0}^{1}|w_y(y,0)|^2\,dy\Bigr) \\
 - & \; \int_{0}^{\tau}\bigl(1-\pi\ell'(t)\ell(t)\bigr)\int_{0}^{t}\tfrac{\ell'(s)}{\ell(s)^2}|w_y(1,s)|^2\,ds\,dt\\
 = & \; \Bigl(\tau+\tfrac{\pi}{2}(1-\ell(\tau)^2)-\tfrac{(1+\ell(\tau)^3)\eta(\tau)}{4}\Bigr)\Bigl(\int_{0}^{1}|w_y(y,0)|^2\,dy\Bigr) \\
 - & \; \int_{0}^{\tau}\bigl(1-\pi\ell'(t)\ell(t)\bigr)\int_{0}^{t}\tfrac{\ell'(s)}{\ell(s)^2}|w_y(1,s)|^2\,ds\,dt\\
 \geq & \; \tfrac{(1+\ell(\tau)^3)\delta(\tau)}{4}\Bigl(\int_{0}^{1}|w_y(y,0)|^2\,dy\Bigr) - \int_{0}^{\tau}\bigl(1-\pi\ell'(t)\ell(t)\bigr)\int_{0}^{t}\tfrac{\ell'(s)}{\ell(s)^2}|w_y(1,s)|^2\,ds\,dt
\end{align*}
where the last inequality come from (\ref{assumption-on-tau}). Therefore, there exist the constants $A_{\tau}$ and $B_{\tau}$ such that:
\begin{equation}\label{estimate operators 1}
\int_{0}^{1} |w_y(y,0)|^2\,dy  \leq A_{\tau}\int_{0}^{\tau}\bigl(|w_y(0,t)|^2+|w_y(1,t)|^2\bigr)\,dt+B_{\tau}\int_{0}^{1} |w(y,0)|^2\,dy
\end{equation}
It is sufficient to prove that there exist a constant $K >0$ such that
\begin{equation}\label{machtyginer-argument}
\int_{0}^{1}|w(y,0)|^2\,dy  \leq K \Bigl(\int_{0}^{\tau} |w_y(0,t)|^2\,dt + \int_{0}^{\tau}|w_y(1,t)|^2\,dt\Bigr)
\end{equation}
Let us denote the operator $T$ from $H_0^1(0,\tau)$ to $L_2(0,\tau)\times L_2(0,\tau)$ and the operator $K$ from $H_0^1(0,1)$ to $L_2(0,1)$ that maps:
\begin{equation}
(Tw)(t) = \bigl(w_y(0,t),w_y(1,t)\bigr)
\end{equation}
\begin{equation}
(Kw)(y) = w(y,0)
\end{equation}
From admissibility and (\ref{estimate operators 1}), we have:
\begin{equation}
a_{\tau}\norm{Tw}_{L_2}^2 + b_{\tau}\norm{Kw}_{L_2}^2 \leq \norm {w_0}_{H_0^1}^2 \leq A_{\tau}\norm{Tw}_{L_2}^2 + B_{\tau}\norm{Kw}_{L_2}^2
\end{equation}
It is easy to see that $K$ is compact operator due to Rellich’s embedding lemma. 
In order to use the unique-compactness lemma~\ref{lem:unique-compactness} for $L=K$, we need to check 
that $T$ is injective.  Observe that $T w=0$ means that $w$ satisfies (\ref{transform-eq})
with Dirichlet conditions and zero Neumann derivative.
It is well known that $w$ vanishes in this case,  see for example
\cite[Theorem~3]{Tataru:schroedinger} or \cite[Corollary~6.1]{Isakov}.
As a consequence, 
\[
c_{\tau}\norm{Tw}_{L_2}^2 \le  \norm {w_0}_{H_0^1}^2 \le C_{\tau}\norm{Tw}_{L_2}^2
\]
for some constants $c(\tau), C(\tau)>0$.
\end{proof}


\subsection{Results for linear moving walls}
Recall the Doescher-Rice representation formula~(\ref{represetiation-formula}) that yields for $t=0$ \begin{equation}\label{representation-u(x,0)}
   u(x, 0) = \sqrt{2} \sum_{n=1}^{N}a_n e^{\tfrac{i\varepsilon x^2}{4}}\sin(n\pi x),
\end{equation}
and denote by
\[
    u_n(x, t) :=  \sqrt{\tfrac{2}{\ell(t)}}  \sin\bigl(\tfrac{n\pi x}{\ell(t)}\bigr).
\]
For all fixed $t >0$, the functions $(u_n(\cdot,t))_{n\ge 1}$ form an
orthonormal basis in $L_2(0,\ell(t))$, since the change of variable
$y = \tfrac{x}{\ell(t)}$ reduces $u_n(\cdot, t)$ to the standard
trigonometric system on $L_2([0,1])$.

\begin{lemma}\label{initial-norm}
  For all finitely supported sequences $(a_n)$ we have the following
  relation between $(a_n)$ and the norms of the initial data $u_0$.
\[ 
\norm{ u(x,0) }_{L_2(0,1)}^2 = \sum_{n=1}^{+\infty}|a_n|^2, \quad \quad \norm{ u(x,0) }_{H_0^1(0,1)}^2 \thicksim \sum_{n=1}^{+\infty}|a_n|^2 n^2
\]
\end{lemma}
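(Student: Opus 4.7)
The key observation is that $u(x,0)$ factorises as
\[
u(x,0) \;=\; e^{i\varepsilon x^2/4}\,v(x), \qquad v(x):=\sqrt{2}\sum_{n=1}^{N} a_n \sin(n\pi x),
\]
so the phase factor has modulus one and the problem reduces to analysing $v$, which is expressed in the standard orthonormal basis $\bigl\{\sqrt{2}\sin(n\pi\,\cdot)\bigr\}_{n\ge 1}$ of $L_2(0,1)$.

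\smallskip

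For the first identity, since $|e^{i\varepsilon x^2/4}|=1$ pointwise, $|u(x,0)|^2 = |v(x)|^2$, and Parseval in the sine basis yields
\[
\norm{u(\cdot,0)}_{L_2(0,1)}^2 = \norm{v}_{L_2(0,1)}^2 = \sum_{n=1}^{+\infty}|a_n|^2.
\]

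\smallskip

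For the second identity I would differentiate: $u_x(x,0) = e^{i\varepsilon x^2/4}\bigl( v'(x) + \tfrac{i\varepsilon x}{2}\,v(x)\bigr)$. Thus
\[
\bigl|\,\norm{u_x(\cdot,0)}_{L_2} - \norm{v'}_{L_2}\,\bigr|
\;\le\; \bignorm{\tfrac{\varepsilon x}{2}v}_{L_2}
\;\le\; \tfrac{\varepsilon}{2}\norm{v}_{L_2}.
\]
By the Poincaré inequality on $H_0^1(0,1)$, $\norm{v}_{L_2}\le \tfrac{1}{\pi}\norm{v'}_{L_2}$, so
\[
\Bigl(1-\tfrac{\varepsilon}{2\pi}\Bigr)\norm{v'}_{L_2} \;\le\; \norm{u_x(\cdot,0)}_{L_2} \;\le\; \Bigl(1+\tfrac{\varepsilon}{2\pi}\Bigr)\norm{v'}_{L_2},
\]
which already proves $\norm{u(\cdot,0)}_{H_0^1}^2\thicksim \norm{v'}_{L_2}^2$ (with constants depending only on $\varepsilon$). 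Finally, using $v'(x)=\sqrt{2}\sum a_n\,n\pi\cos(n\pi x)$ and the orthogonality $\int_0^1\cos(n\pi x)\cos(m\pi x)\,dx = \tfrac12\delta_{nm}$ for $n,m\ge 1$, one computes
\[
\norm{v'}_{L_2(0,1)}^2 = \pi^2\sum_{n=1}^{+\infty}|a_n|^2 n^2,
\]
which combined with the previous two-sided estimate yields the claimed equivalence.

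\smallskip

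There is no real obstacle here: the only point worth care is that the phase $e^{i\varepsilon x^2/4}$ does interact with the derivative (giving the extra $\tfrac{i\varepsilon x}{2}v$ term), but this term is a lower-order perturbation dominated by $v'$ via Poincaré, which is exactly why one obtains equivalence of norms rather than an equality (in contrast to the $L_2$ case). If an equality rather than equivalence were desired in the $H_0^1$ norm, one would have to work with a weighted inner product; the paper only needs the equivalence $\thicksim$, and the argument above delivers it.
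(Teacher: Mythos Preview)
Your proof is correct and follows essentially the same approach as the paper: factor out the unimodular phase, reduce the $L_2$ identity to Parseval for the sine basis, and for the $H_0^1$ equivalence differentiate and treat the extra term $\tfrac{i\varepsilon x}{2}v$ as a lower-order perturbation of $v'$. The only minor difference is that the paper separates $|v'+i\tfrac{\varepsilon x}{2}v|^2$ into $|v'|^2+|\tfrac{\varepsilon x}{2}v|^2$ via a real/imaginary (Pythagorean) split, whereas you use the triangle inequality together with Poincar\'e; your route has the small advantage of not tacitly assuming the $a_n$ are real, at the harmless cost of requiring $\varepsilon<2\pi$ for the lower bound to be nontrivial.
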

\begin{proof}
Observe that 
\begin{align*}
\norm{e^{-\tfrac{i\varepsilon x^2}{4}}u_N(x) }_{L_2(0,1)}^2
= & \; \norm{ u_N(x) }_{L_2(0,1)}^2
=  2\int_{0}^{1}\Bigl|\sum_{n=1}^{N}a_n \sin(n\pi x)\Bigr|^2 \dx \\
= & \; \; 2\int_{0}^{1}\Bigl|\sum_{n=1}^{N}a_n \sin(n\pi x)\Bigr|^2 \dx 
=  \sum_{n=1}^{\infty}|a_n|^2.
\end{align*}
Since $(a_n)$ is a finite sequence we may interchange differentiation and summation and obtain
\[
\tfrac{\mathrm{d}}{\dx}
u(x) = \sqrt{2} \sum_{n=1}^{N} a_n e^{\tfrac{i\varepsilon x^2}{4}} 
(ix \tfrac{\varepsilon}2  \sin(n\pi x) + n\pi\cos(n\pi x) )
\]
so that, squaring real and imaginary parts, we find
\begin{align*}
\norm{u(x) }_{H_0^1(0,1)}^2
= & \; 2\int_{0}^{1}\Bigl|\sum_{n=1}^{N}a_n n\pi \cos(n\pi x)\Bigr|^2 \dx 
      + 2 \int_{0}^{1}\Bigl|\sum_{n=1}^{N}a_n x \tfrac{\varepsilon}2  \sin(n\pi x)\Bigr|^2\\
= & \; \pi^2 \sum_{n=1}^{N}|a_n|^2n^2 + 2 \int_{0}^{1}\Bigl|\sum_{n=1}^{N}a_n x \tfrac{\varepsilon}2  \sin(n\pi x)\Bigr|^2 \\
\leq & \; \pi^2 \sum_{n=1}^{N}|a_n|^2n^2 + \frac{\varepsilon^2}{2} \int_{0}^{1}\Bigl|\sum_{n=1}^{N}a_n  \sin(n\pi x)\Bigr|^2 \\
= & \; \pi^2 \sum_{n=1}^{N}|a_n|^2n^2 + \frac{\varepsilon^2}{2}\sum_{n=1}^{N}|a_n|^2 \leq  C(\varepsilon) \sum_{n=1}^{N}|a_n|^2n^2  \qedhere
\end{align*}
\end{proof}

\begin{lemma}\label{ONS-lemma}
  Let $\varepsilon \in (0,\tfrac{\pi}{2})$ and
  $\tau = \tfrac{2}{\pi-2\varepsilon}$, then the functions
  $b_n(t) =
  \tfrac{\sqrt{\pi}}{\sqrt{2}\ell(t)}e^{-i\pi^2n^2\tfrac{t}{\ell(t)}}$
  for $n\ge 1$ form an orthonormal system in $L_2(0,\tau)$.
\end{lemma}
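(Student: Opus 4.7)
The goal is to compute
\[
\langle b_n, b_m\rangle_{L_2(0,\tau)}
= \int_0^\tau \frac{\pi}{2\ell(t)^2}\,\exp\!\Bigl(i\pi^2(m^2-n^2)\tfrac{t}{\ell(t)}\Bigr)\dt
\]
and show it equals $\delta_{nm}$. The crucial observation is that the factor $\tfrac{1}{\ell(t)^2}\dt$ and the phase $\tfrac{t}{\ell(t)}$ are not independent: if one introduces the new time variable
\[
s := \pi\,\tfrac{t}{\ell(t)} = \pi\,\tfrac{t}{1+\varepsilon t},
\]
then a direct calculation gives
\[
ds = \tfrac{\pi}{\ell(t)^2}\,\dt,
\]
so that the $\ell$-dependent density and the $\ell$-dependent frequency are simultaneously linearized. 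This is the only non-routine step; once it is made, everything else is automatic.

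Under this substitution the integrand becomes $\tfrac12 e^{i\pi(m^2-n^2)s}$, and I need to compute the new endpoints. Using $\ell(\tau) = 1+\varepsilon\tau$ together with the specific choice $\tau = \tfrac{2}{\pi-2\varepsilon}$ one checks
\[
\ell(\tau) = \tfrac{\pi}{\pi-2\varepsilon}, \qquad \text{so} \qquad s(\tau) = \pi\,\tfrac{\tau}{\ell(\tau)} = 2,
\]
while clearly $s(0)=0$. Hence
\[
\langle b_n, b_m\rangle_{L_2(0,\tau)} = \tfrac12\int_0^2 e^{i\pi(m^2-n^2)s}\,ds.
\]

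Finally, since $m^2-n^2 \in \ZZ$, the integral $\int_0^2 e^{i\pi k s}\,ds$ vanishes for every nonzero integer $k$ (the antiderivative $\tfrac{1}{i\pi k}e^{i\pi k s}$ takes the same value at $s=0$ and $s=2$), and equals $2$ when $k=0$. Therefore $\langle b_n, b_m\rangle = \delta_{nm}$, which is exactly the orthonormality claim. The main (and essentially only) obstacle is spotting the substitution $s = \pi t/\ell(t)$; the choice $\tau = 2/(\pi-2\varepsilon)$ is then dictated by the requirement that the integration interval in $s$ has length $2$, which is precisely what makes $\{e^{i\pi k s}\}_{k\in\ZZ}$ an orthogonal family on $[0,s(\tau)]$.
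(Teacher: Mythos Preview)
Your proof is correct and follows essentially the same approach as the paper: both rely on the key observation that $(t/\ell(t))' = 1/\ell(t)^2$, so a change of variable proportional to $t/\ell(t)$ simultaneously removes the weight and straightens the phase, reducing the inner product to a standard trigonometric orthogonality on an interval of the right length. The only cosmetic difference is that the paper uses $x = t/\ell(t)$ (landing on $[0,2/\pi]$) while you use $s = \pi t/\ell(t)$ (landing on $[0,2]$), which amounts to a harmless rescaling.
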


\begin{proof}
  Note that
  $\bigl(\tfrac{t}{\ell(t)}\bigr)' =
  \tfrac{\ell(t)-t\ell'(t)}{\ell(t)^2} = \tfrac{1}{\ell(t)^2}$.
  Therefore, the obvious change of variable $x=\tfrac{t}{\ell(t)}$ reduces $f_n$ to a standard trigonometric function
  on $[0, \tfrac{\tau}{\ell(\tau)}]$. Observe that
  $\tfrac{\tau}{\ell(\tau)} =   \tfrac{2}{\pi-2\varepsilon}(1+\tfrac{2\varepsilon}{\pi-2\varepsilon})^{-1}
  = \tfrac{2}{\pi}$.  Now orthonormality easily follows.
\end{proof}
Observe that the above sequence $\{b_n(t)\}_{n \geq 1}$ is not an
orthonormal basis. Indeed, with
$f(t) = \frac{\sqrt{\pi}}{\sqrt{2}\ell(t)}e^{3i\pi^2\frac{t}{\ell(t)}}$,
we have $\langle f(t),b_n(t)\rangle = 0$ for all $n \in \mathbb{N}$.

\subsubsection{Neumann observation at the Boundary}

\begin{proof}[Proof of Theorem~\ref{thm:new-obs-boundary}]
  We start considering only the first term at $x=0$.  As in the proof
  of Lemma~\ref{initial-norm} we consider for a moment only initial
  data associated with finitely supported sequences $(a_n)$.
  Differentiating the representation formula
  (\ref{represetiation-formula}) $u$ term by term yields
\[
    u_x(0,t) = \sum_{n=1}^{+\infty} a_n  \bigl(\tfrac{2}{\ell(t)}\bigr)^{\onehalf}e^{-i\pi^2n^2\tfrac{t}{\ell(t)}}\tfrac{n\pi}{\ell(t)},
\]
and therefore
\[
   \norm{ u_x(0,t) }_{L_2(0,\tau)}^2
=  \;  \int_{0}^{\tau}\frac{2\pi^2}{\ell(t)^3}\left|\sum_{n=1}^{+\infty}n a_n  e^{-i\pi^2n^2\frac{t}{\ell(t)}}\right|^2 \dt.
\]
Using the monotonicity of $\ell(t)$ in $[0,\tau]$, we have $\frac{2\pi^2}{\ell(\tau)} J \; \le  \; \norm{ u_x(0,\cdot) }_{L_2(0,\tau)}^2 \; \le  \;  2\pi^2 J$
where 
\[
  J=\int_{0}^{\tau}\left|\sum_{n=1}^{+\infty}n a_n  e^{-i\pi^2n^2\frac{t}{\ell(t)}}\right|^2 \tfrac{\dt}{\ell(t)^2}.
\]
This allows to focus only on the integral $J$, where we abbreviate
$b_n = n a_n e^{-i\pi^2n^2/\varepsilon}$
and make a change of variable $\xi = \frac{-1}{\ell(t)} + \tfrac12(\frac{1}{\ell(0)}{+}\frac{1}{\ell(\tau)})$. 
Letting  $T=\frac{1}{\ell(0)} - \frac{1}{\ell(\tau)}$,
the above double inequality rewrites as 
\[
 \int_{-\nicefrac{T}{2}}^{+\nicefrac{T}{2}}   \left|\sum_{n=1}^{+\infty} b_n  e^{-i  \frac{\pi^2n^2}{\varepsilon}\,\xi}\right|^2 \dxi
\quad \approx \quad \norm{ u_x(0,t) }_{L_2(0,\tau)}^2 
\]
The sequence $\lambda_n = \frac{\pi^2n^2}{\varepsilon}$ satisfies the hypotheses of \cite[Theorem 3.1 and Corollary 3.3]{tenenbaum}
so that, for all $k > \tfrac{3}{2} \pi^2$ and $r=\nicefrac{\varepsilon}{\pi^2}$
\[
 \int_{-\nicefrac{T}{2}}^{+\nicefrac{T}{2}}   \left|\sum_{n=1}^{+\infty} b_n  e^{-i  \frac{\pi^2n^2}{\varepsilon}\,\xi}\right|^2 \dxi
\gg  \;  e^{-\frac{2k}{r \tau}} \sum_{n=1}^{+\infty}\left| b_n \right|^2  \\
=  \;  {e^{-\frac{2k}{r\tau}}}\sum_{n=1}^{+\infty}\left| n a_n  \right|^2. 
\]
On the other hand side, if
$ T  \in [m \tfrac{\varepsilon}\pi, (m{+}1) \tfrac{\varepsilon}\pi)$,
we have by periodicity and Parseval's identity
\[
\int_{-\nicefrac{T}{2}}^{+\nicefrac{T}{2}}   \left|\sum_{n=1}^{+\infty} b_n  e^{-i n^2  \frac{\pi^2}{\varepsilon}\,\xi}\right|^2 \dxi
\le
\int_{-(m{+}1)\frac{\varepsilon}{\pi}}^{(m{+}1)\frac{\varepsilon}{\pi}}   \left|\sum_{n=1}^{+\infty} b_n  e^{-i n^2 \frac{\pi^2}{\varepsilon}\,\xi}\right|^2 \dxi
=  (m{+}1)  \sum_{n=1}^{+\infty}\left| b_n \right|^2.
\]
We conclude by Lemma~\ref{initial-norm} that
\[
c(\varepsilon) \norm{ u_0  }_{H_0^1(0,1)}^2
\; \le \; \norm{ u_x(0,t) }_{L_2(0,\tau)}^2  \; \le \;
C(\varepsilon) \norm{ u_0 }_{H_0^1(0,1)}^2.
\]
This inequality being true for all $u_0$ leading to finitely supported
sequences $(a_n)$, it is true for any $u_0 \in H_0^1(0,1)$
by density.

\medskip

For second term at $x=\ell(t)$, we see for finitely supported
sequences $(a_n)$ that
\[
   u_x(\ell(t),t) = \sum_{n=1}^{+\infty} (-1)^n a_n  \bigl(\tfrac{2}{\ell(t)}\bigr)^{\onehalf}e^{-i\pi^2n^2\tfrac{t}{\ell(t)}}\tfrac{n\pi}{\ell(t)}e^{i\tfrac{\varepsilon}{4}\ell(t)}
\]
Taking the $L_2$-norm, one get the equivalent between $\norm{u_x(\ell(t),t)}_{L_2}$ and $\norm{u_x(0,t)}_{L_2}$
\begin{align*}
\norm{ u_x(\ell(t),t) }_{L_2(0,\tau)}^2
   = & \;  \int_{0}^{\tau}\left|\sum_{n=1}^{+\infty} 
         (-1)^n a_n \bigl(\tfrac{2}{\ell(t)}\bigr)^{\onehalf}e^{-i\pi^2n^2\tfrac{t}{\ell(t)}}
         \tfrac{n\pi}{\ell(t)}e^{i\tfrac{\varepsilon}{4}\ell(t)}\right|^2 \dt \\
   = & \;  \int_{0}^{\tau} \tfrac{2\pi^2}{\ell(t)} \left|\sum_{n=1}^{+\infty} 
          \; \bigl((-1)^n  n a_n\bigr) 
          e^{-i\pi^2n^2\tfrac{t}{\ell(t)}} \right|^2 \tfrac{\dt }{\ell(t)^2}.
\end{align*}
Clearly, the rest proof follows the lines above.
\end{proof}

\subsubsection{Internal Point Observability}
\begin{proof}[Proof of Theorem~\ref{thm:observation-at-internal-points}]
Since $\ell(t) \geq 1$ for all $t$, 
\[
\int_{0}^{\tau}\frac{2}{\ell(t)}\left|\sum_{n=1}^{+\infty}a_n e^{-i\pi^2n^2\tfrac{t}{\ell(t)}}\sin\bigl(\tfrac{n\pi a}{\ell(t)}\bigr)\right|^2 \dt 
\geq 
  \int_{0}^{\tau}\frac{2}{\ell(t)^2}\left|\sum_{n=1}^{+\infty}a_n e^{-i\pi^2n^2\tfrac{t}{\ell(t)}}\sin\bigl(\tfrac{n\pi a}{\ell(t)}\bigr)\right|^2 \dt.
\]
By definition,
$\sin\bigl(\tfrac{n\pi a}{\ell(t)}\bigr) = \frac{1}{2i}\bigl(\exp({i
  \tfrac{n\pi a}{\ell(t)}})-\exp({-i \tfrac{ n \pi a}{\ell(t)}})
\bigr)$. 
Therefore,
\begin{align*}
\sum_{n=1}^{+\infty}a_n e^{-i\pi^2n^2\tfrac{t}{\ell(t)}}\sin\bigl(\tfrac{n\pi a}{\ell(t)}\bigr)
 =  & \; \frac{1}{2i}\sum_{n=1}^{+\infty}a_n e^{-i\pi^2n^2\tfrac{t}{\ell(t)}}\bigl(e^{\frac{i n\pi a}{\ell(t)}}-e^{-\frac{i n \pi a}{\ell(t)}}\bigr) \\
=  & \;  \frac{1}{2i}\sum_{n=1}^{+\infty}a_n e^{-i\pi^2n^2\tfrac{1}{\varepsilon}}\bigl(e^{-\frac{i\pi^2n^2}{\varepsilon \ell(t)}+ \frac{i n\pi a}{\ell(t)}}-e^{-\frac{i\pi^2n^2}{\varepsilon \ell(t)}-\frac{i n \pi a}{\ell(t)}}\bigr)
\end{align*}
For $n \in \ZZ$, we extend the series by $a_{n} = a_{-n}$, and
$\lambda_{n} = \frac{\pi^2n^2}{\varepsilon} + \text{sign}(n) n\pi a$.
The sequence $\lambda_n = \frac{\pi^2n^2}{\varepsilon}$ is regular and
satisfies the hypotheses of \cite[Theorem 3.1]{tenenbaum} with
$r = \frac{\varepsilon}{\pi^2}$ and $C = a\pi$. We follow the lines of
the proof of Theorem~\ref{thm:new-obs-boundary}: changing the
variable $\xi =\frac{-1}{\ell(t)}$ gives with the notation
$T=\frac{1}{\ell(0)} - \frac{1}{\ell(\tau)}$,
\[
\int_{0}^{\tau}\frac{1}{\ell(t)^2}\left|\sum_{n=1}^{+\infty}a_n e^{-i\pi^2n^2\tfrac{t}{\ell(t)}}\sin\bigl(\tfrac{n\pi a}{\ell(t)}\bigr)\right|^2 \dt  
= 
\frac{1}{\varepsilon}\int_{-\nicefrac{T}{2}}^{+\nicefrac{T}{2}} \left|\sum_{n \in \ZZ}^{}  e^{\frac{-i\pi^2n^2}{\varepsilon}} a_n   e^{i\lambda_n \xi}\right|^2 \dxi
\]
we write $b_n = e^{\frac{-i\pi^2n^2}{\varepsilon}} a_n$ and use
\cite[Corollary 3.3]{tenenbaum} with $k > \frac{3\pi^2}{2}$:
\[
 \frac{1}{\varepsilon}\int_{-T}^{T}\left|\sum_{n \in \ZZ}^{}a_n e^{\frac{-i\pi^2n^2}{\varepsilon}}e^{-i\lambda_n \xi}\right|^2 \dxi 
\; \gg \;  e^{-\frac{2k}{rT}}\sum_{n \in \ZZ}^{}\left| a_n e^{\frac{-i\pi^2n^2}{\varepsilon}}\right|^2 
\ge  {e^{-\frac{2k}{rT}}}\sum_{n =1}^{+\infty}\left| a_n \right|^2. \qedhere
\]
For the upper estimate, we use similar method as in theorem (\ref{thm:new-obs-boundary}). More precisely,
\[
\norm{u(a,t)}_{L_2} \leq \int_{0}^{\tau}\frac{2}{\ell(t)}\left|\sum_{n=1}^{+\infty}a_n e^{-i\pi^2n^2\tfrac{t}{\ell(t)}}\right|^2 \dt \lesssim (m+1)\sum_{n=1}^{+\infty}|a_n|^2
\]
where $m$ be the integer number such that $\frac{\pi\varepsilon}{T} \in [m,m+1]$ with $T = \frac{1}{\ell(0)}-\frac{1}{\ell(\tau)}$.
\end{proof}

\subsubsection{$L_p$-admissibility and observability}
\begin{proof}[Proof of Theorem~\ref{thm:Lp-ell2-observation}]
  The upper estimate yielding $K_p(\tau)$ is obtained by interpolation
  of the two upper estimates in Theorem~\ref{thm:new-obs-boundary}.
  We are left with the lower estimate.  Since $u \in H_0^1$,
  $(n a_n) \in \ell_2$, and so $(a_n) \in \ell_1$ by the
  Cauchy-Schwarz inequality.  Let $p \in (0,2)$ and let
  $\theta = \frac{2}{4-p} \in (0,1)$ which is chosen to satisfy
  $p\theta + 4(1-\theta) = 2$. By Hölder's inequality we then have
\begin{equation}\label{Holder-estimate}
  \begin{aligned}
        & \int_{0}^{\tau}\bigl|u(a,t)\bigr|^2 \,dt 
        = \int_{0}^{\tau}\bigl|u(a,t)\bigr|^{p\theta} . \bigl|u(a,t)\bigr|^{4(1-\theta)} \,dt \\
\leq \; & \Bigl(\int_{0}^{\tau}\bigl|u(a,t)\bigr|^p dt\Bigr)^{\theta} . 
          \Bigl(\int_{0}^{\tau}\bigl|u(a,t)\bigr|^4 dt \Bigr)^{1-\theta}
  \end{aligned}
\end{equation}
From trivial argument on boundedness of $\sin(\tfrac{n\pi a}{\ell(t)})$ and 
$e^{\tfrac{i\varepsilon a^2}{4\ell(t)}-i\pi^2n^2\tfrac{t}{\ell(t)}}$:
\[
  \bigl|u(a,t)\bigr|^2 
= \Bigl|\sum_{n=1}^{+\infty}a_n e^{\frac{i\varepsilon a^2}{4\ell(t)}-i\pi^2n^2\tfrac{t}{\ell(t)}}\sin\bigl(\tfrac{n\pi a}{\ell(t)}\bigr)\Bigr|^2 
\leq \Bigl(\sum_{n=1}^{+\infty}|a_n| \Bigr)^2
\]
Combining with the estimate (\ref{Holder-estimate}), one get:
\begin{equation}  \label{eq:4-to-2}
\int_{0}^{\tau}\bigl|u(a,t)\bigr|^4 dt 
\leq \Bigl(\sum_{n=1}^{+\infty}|a_n|\Bigr)^2 \Bigl(\int_{0}^{\tau}\bigl|u(a,t)\bigr|^2 \,dt \Bigr) 
\end{equation}
From inequalities (\ref{Holder-estimate}) and (\ref{eq:4-to-2}) and Theorem~(\ref{thm:observation-at-internal-points}) we deduce now
\begin{align*}
\int_{0}^{\tau}\bigl|u(a,t)\bigr|^p dt 
\geq & \; \Bigl( \int_0^\tau |u(a, t)|^2\,dt \Bigr)^{\nicefrac{1}{\theta}}  \Bigl( \int_0^\tau |u(a, t)|^4\,dt \Bigr)^{\frac{\theta-1}{\theta}}\\
\geq  & \; \Bigl( \int_0^\tau |u(a, t)|^2\,dt \Bigr)^{\nicefrac{1}{\theta}} 
  \Bigl(\sum_{n=1}^{+\infty}|a_n| \Bigr)^{\frac{2(\theta -1)}{\theta}}\Bigl(\int_{0}^{\tau}\bigl|u(a,t)\bigr|^2 dt \Bigr)^{\frac{\theta-1}{\theta}} \\
\geq & \; k \Bigl(\sum_{n=1}^{+\infty}|a_n|^2 \Bigr)
              \Bigl(\sum_{n=1}^{+\infty}|n a_n|^2\Bigr)^{2\frac{\theta-1}{\theta}} \geq k \bignorm{u_0}_{L_2(0,1)}^2 \bignorm{ u_0 }_{H_0^1}^{2\frac{\theta-1}{\theta}}.
\end{align*}
Since $\frac{\theta-1}{\theta} = \frac{p-2}2$, the result follows.
\end{proof}

\section{Boundary controllability of dual problem}

Since we have already stated several theorems that can be interpreted as
exact observation we will briefly sketch the duality theory that
allows to rephrase these assertions in terms of exact control, then the solution $z$ to adjoint problem
\begin{equation}  \label{eq:retrograde}
    z'(t) = -A(t)^* z(t) - C(t)^*C(t) w(t) \qquad z(\tau)=0
\end{equation}
satisfies
$\langle w_0,z(0) \rangle = -\int_0^\tau \tfrac{d}{dt} \langle
  w(t), z(t)\rangle  \,dt = \int_0^\tau \|C(t)w(t)\|^2 \,dt$
by injection of the respective differential equations of $w$ and $z$.
Hence exact observability implies that the Gramian
$Q: w_0 \mapsto z(0)$ satisfies
$\|Qw_0\| \|w_0\| \ge \langle w_0 , Q w_0 \rangle \ge \delta \|w_0\|$
to the effect that $Q$ has closed image. Moreover, if $Q^* w_0 = 0$,
taking scalar product with $w_0$ reveals $w_0=0$, so $Q^*$ is
injective and hence $Q$ has dense range. By the open mapping theorem,
$Q$ is therefore an isomorphism on $X$. This means that the adjoint
problem (\ref{eq:retrograde}) can be steered to any state
$z(0) \in X$ by an appropriate choice of the initial value $w_0$. Indeed, for $u, v \in D(A(t))$
we have
\begin{align*}
\langle A(t)u, v \rangle_X  
= & \;   \Bigl\langle \tfrac{i}{\ell(t)^2}u_{yy}+\tfrac{\ell'(t)}{\ell(t)}yu_y , v \Bigr\rangle_X  =  \int_0^{1} \tfrac{i}{\ell(t)^2}u_{yy}\overline{v}\,dy + \int_{0}^{1}\tfrac{\ell'(t)}{\ell(t)}yu_y\overline{v}\,dy  \\
\text{(int. by parts)} & \; = -\tfrac{i}{\ell(t)^2}\int_0^{1} u_y \overline{v}_{y}\, dy - \tfrac{\ell'(t)}{\ell(t)}\int_{0}^{1}(yu\overline{v}_y+u\overline{v})\,dy\\
\text{(int. by parts)} & \; = \tfrac{i}{\ell(t)^2}\int_0^{1}u\overline{v}_{yy}\,dy - \tfrac{\ell'(t)}{\ell(t)}\int_{0}^{1}(yu\overline{v}_y+u\overline{v})\,dy\\
= & \; -\Bigl\langle u,  \tfrac{i}{\ell(t)^2}v_{yy}+\tfrac{\ell'(t)}{\ell(t)}yv_y \Bigr\rangle - \Bigl\langle u, \tfrac{\ell'(t)}{\ell(t)}v\Bigr\rangle = \Bigl\langle u, -\bigl(A(t)+\tfrac{\ell'(t)}{\ell(t)}\bigr)v\Bigr\rangle
\end{align*}
It turns out that in our case $A(t)^* = -A(t)-\frac{\ell'(t)}{\ell(t)}$. So exact observation of the Schrödinger equation
(\ref{initial-system-P}) can be reformulated as exact control
for the Schrödinger equation with zero final time.  We turn back to these
ideas after stating our first theorem. In the case of linear moving $\ell(t) = 1{+}\varepsilon t$, let
$C(t) : D(A(t)) \to \mathbb{C}$ be given by
$C(t) (\varphi) := \varphi_y(b)$ where $b \in \{0,1\}$. The (lower) estimate in theorems~
\ref{thm:new-obs-boundary} and \ref{thm:obs-inequality} then reformulates as exact
observability of $C(t)$ for the non-autonomous Cauchy problem
(\ref{eq:abstract-nonaut-pb}). Some care has to be taken since $C(t)$
is unbounded on $X$. Indeed, $C(t)^{*}: \mathbb{C} \to D(A(t))'$ is
given by
$C(t)^* \alpha = - \alpha \, \tfrac{d}{dy} \delta_{y=b}$, then
we obtain exact controllability of (\ref{eq:retrograde}) in a
distributional sense:
\[
 z_t = \tfrac{i}{\ell(t)^2}z_{yy} + \tfrac{\ell'(t)}{\ell(t)}yz_y+\tfrac{\ell'(t)}{\ell(t)}z + w_y(b,t)\tfrac{d}{dy} \delta_{y=b}
\qquad \text{and}\quad  z(y,\tau) = 0
\]
Multiplying with a test function
$\eta \in D((0, 1))$, and integrating on $[0, 1]$ we
obtain by partial integration
\begin{align*}
   \int_0^{1} z_{t}\eta(y)\, dy \; &  = \; \int_0^{1}\Bigl(\tfrac{i}{\ell(t)^2}z_{yy}+\tfrac{\ell'(t)}{\ell(t)}(yz)_y\Bigr) \eta(y)\,dy  - w_y(b, t) \eta'(b)  \\
= & \; \int_0^{1}\Bigl(\tfrac{i}{\ell(t)^2}z\eta''(y)-\tfrac{\ell'(t)}{\ell(t)}yz\eta'(y)\Bigr)\,dy + \Bigl(\tfrac{i}{\ell(t)^2}z(b, t) - w_y(b , t)\Bigr) \, \eta'(b)  
\end{align*}
This is possible for any test function $\eta$ only if the point
evaluation vanishes. The dual statement of the lower estimate in theorems~\ref{thm:new-obs-boundary} and \ref{thm:obs-inequality} is thus exact controllability of a
Schrödinger  equation with Dirichlet control on the right boundary,
\begin{equation}
  \label{eq:dual-problem}
  \left\{
    \begin{array}{rll}
      z_{t} & = \tfrac{i}{\ell(t)^2}z_{yy}+\tfrac{\ell'(t)}{\ell(t)}yz_y+\tfrac{\ell'(t)}{\ell(t)}z  \qquad                        & (y, t) \in (0,1)\times (0,\tau)\\
      z(\overline{b}, t) & = 0  \qquad     & \{\overline{b}\}\bigcup\{b\} = \{0,1\}, t \ge 0\\
      z(b, t) & = -i\ell(t)^2w_y(b, t)\qquad     & t \ge 0\\
      z(y,\tau) & = 0   \qquad       & y \in [0,1]\\
    \end{array}
  \right.
\end{equation}
We reverse back to the moving boundary problem by taking $x = \ell(t)y$ and $h(x,t) = z(y,t)$. Then the problem can be  written as:
\begin{equation}
  \label{eq:dual-problem2}
  \left\{
    \begin{array}{rll}
      ih_{t}+h_{xx}-i\tfrac{\ell'(t)}{\ell(t)}h & = 0  \qquad  & (x, t) \in (0,\ell(t))\times (0,\tau)\\
      h(\ell(t), t) & = 0  \qquad     &  t \ge 0\\
      h(0, t) & = -i\ell(t)^3u_x(0, t)\qquad     & t \ge 0\\
      h(x,\tau) & = 0   \qquad       & x \in [0,\ell(t)]\\
    \end{array}
  \right.
\end{equation}
or
\begin{equation}
  \label{eq:dual-problem3}
  \left\{
    \begin{array}{rll}
      ih_{t}+h_{xx}-i\tfrac{\ell'(t)}{\ell(t)}h & = 0  \qquad  & (x, t) \in (0,\ell(t))\times (0,\tau)\\
      h(0, t) & = 0  \qquad     &  t \ge 0\\
      h(\ell(t), t) & = -i\ell(t)^3u_x(\ell(t), t)\qquad     & t \ge 0\\
      h(x,\tau) & = 0   \qquad       & x \in [0,\ell(t)]\\
    \end{array}
  \right.
\end{equation}
In general situation of $\ell(t)$ satisfying condition $(\ref{condition on the curve})$, one take
$C(t) : D(A(t)) \to \mathbb{C}\times \mathbb{C}$ be given by
$C(t) (\varphi) := (\varphi_y(0),\varphi_y(1))$. Therefore, the dual operator $C(t)^{*}: \mathbb{C}\times \mathbb{C} \to D(A(t))'$ is
given by $C(t)^* (\alpha,\beta) = - \alpha \, \tfrac{d}{dy} \delta_{y=0}-\beta\tfrac{d}{dy}\delta_{y=1}$. Using similar argument, we obtain exact controllability of a
Schrödinger equation with Dirichlet control applied on both of boundaries
\begin{equation}
  \label{eq:dual-problem4}
  \left\{
    \begin{array}{rll}
      ih_{t}+h_{xx}-i\tfrac{\ell'(t)}{\ell(t)}h & = 0  \qquad  & (x, t) \in (0,\ell(t))\times (0,\tau)\\
      h(0, t) & = -i\ell(t)^3u_x(0, t)  \qquad     &  t \ge 0\\
      h(\ell(t), t) & = -i\ell(t)^3u_x(\ell(t), t)\qquad     & t \ge 0\\
      h(x,\tau) & = 0   \qquad       & x \in [0,\ell(t)]\\
    \end{array}
  \right.
\end{equation}

\subsection*{Acknowledgements}
This work was undertaken as part of the authors PhD thesis at the University of Bordeaux. The author wants to thank his advisers Bernhard Haak and El-Maati Ouhabaz for their patiently supports, great motivations, and immense knowledges.\\ 

\noindent Further, we kindly acknowledge valuable discussions with  Marius Tucsnak.


\end{document}